\numberwithin{equation}{section}
\newtheorem{thm}{Theorem}[section]
\newtheorem{pro}[thm]{Proposition}
\newtheorem{cor}[thm]{Corollary}
\newtheorem{lem}[thm]{Lemma}
\newtheorem{opq}[thm]{Problem}
\newtheorem*{opq*}{\bf Problem}
\theoremstyle{remark}
\newtheorem{rem}[thm]{Remark}
\theoremstyle{definition}
\newtheorem{exa}[thm]{Example}
\newcommand*{\cbb}{\mathbb{C}}
\newcommand*{\cov}[1]{\mathrm{cov}(#1)}
\newcommand*{\Ge}{\geqslant}
\newcommand*{\hh}{\EuScript{H}}
\newcommand*{\is}[2]{\langle#1,#2\rangle}
\newcommand*{\jd}[1]{\mathcal N(#1)}
\newcommand*{\kk}{\EuScript{K}}
\newcommand*{\Le}{\leqslant}
\newcommand*{\mcal}{\EuScript{M}}
\newcommand*{\ob}[1]{{\mathcal R}(#1)}
\newcommand*{\ogr}[1]{\boldsymbol B(#1)}
\begin{document}
   \title[Bishop-like theorems for non-subnormal operators]
{Bishop-like theorems for non-subnormal operators}
   \author[Z.\ J.\ Jab{\l}o\'nski]{Zenon Jan
Jab{\l}o\'nski}
   \address{Instytut Matematyki,
Uniwersytet Jagiello\'nski, ul.\ \L ojasiewicza 6,
PL-30348 Kra\-k\'ow, Poland}
\email{Zenon.Jablonski@im.uj.edu.pl}
   \author[I.\ B.\ Jung]{Il Bong Jung}
   \address{Department of Mathematics,
Kyungpook National University, Daegu 702-701, Korea}
   \email{ibjung@knu.ac.kr}
   \author[J.\ Stochel]{Jan Stochel}
\address{Instytut Matematyki, Uniwersytet
Jagiello\'nski, ul.\ \L ojasiewicza 6, PL-30348
Kra\-k\'ow, Poland} \email{Jan.Stochel@im.uj.edu.pl}
   \thanks{The research of the first and third
authors was supported by the National Science Center
(NCN) Grant OPUS No.\ DEC-2021/43/B/ST1/01651. The
research of the second author was supported by Basic
Science Research Program through the National Research
Foundation of Korea (NRF) funded by the Ministry of
Education (NRF-2021R111A1A01043569).}
   \subjclass[2020]{Primary 47A20, 47B20 Secondary
47B28}
   \keywords{$2$-isometries, Brownian unitaries,
strong and $^*$-strong operator topologies,
extensions, compressions}
   \begin{abstract}
The celebrated Bishop theorem states that an operator
is subnormal if and only if it is the strong limit of
a net (or a sequence) of normal operators. By the
Agler-Stankus theorem, $2$-isometries behave similarly
to subnormal operator in the sense that the role of
subnormal operators is played by $2$-isometries, while
the role of normal operators is played by Brownian
unitaries. In this paper we give Bishop-like theorems
for $2$-isometries. Two methods are involved, the
first of which goes back to Bishop's original idea and
the second refers to Conway and Hadwin's result of
general nature. We also investigate the strong and
$*$-strong closedness of the class of Brownian
unitaries.
   \end{abstract}
   \maketitle

   \section{Introduction}
We begin by providing the concepts and facts necessary
to understand the content of the article. In what
follows, $\cbb$ stands for the set of complex numbers.
Given two (complex) Hilbert spaces $\hh$ and $\kk,$ we
denote by $\ogr{\hh,\kk}$ the Banach space of all
bounded linear operators from $\hh$ to $\kk$. The
kernel and the range of an operator $T \in
\ogr{\hh,\kk}$ are denoted by $\jd{T}$ and $\ob T$,
respectively. We regard $\ogr{\hh}:=\ogr{\hh,\hh}$ as
a $C^*$-algebra. The identity operator on $\hh$ is
denoted by $I$. An operator $T\in \ogr{\hh}$ is said
to be {\em normal} if $T^*T=TT^*$, and {\em subnormal}
if there exists a Hilbert space $\kk$ and a normal
operator $N\in \ogr{\kk}$ such that $\hh \subseteq
\kk$ (isometric embedding) and $N$ is an extension of
$T$, that is, $Th=Nh$ for all $h\in \hh$. If $\hh$ is
infinite-dimensional, then the class of normal
operators on $\hh$ is strictly contained in the class
of subnormal operators on $\hh$ (for more information,
see \cite{Co91}). The celebrated Bishop theorem gives
the following topological characterization of
subnormal operators, where the ``strong closure''
refers to the ``strong operator topology'' on
$\ogr{\hh}$ (see Section~\ref{Sec.2}).
   \begin{thm}[{\cite[Theorem~3.3]{Bis57}; see also
\cite[Theorem~II.1.17]{Co91}}]
The strong closure of
the set of all normal operators on a Hilbert space
$\hh$ is equal to the set of all subnormal operators
on $\hh$.
   \end{thm}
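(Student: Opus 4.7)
The plan is to prove the two set-theoretic inclusions separately, combining the Bram--Halmos positivity characterisation of subnormality with an explicit spectral approximation of the minimal normal extension.

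For the inclusion $\subseteq$, I would invoke the Bram--Halmos criterion: $T \in \ogr{\hh}$ is subnormal if and only if $\sum_{i,j=0}^{n} \is{T^j h_i}{T^i h_j} \Ge 0$ for every finite tuple $h_0, \ldots, h_n \in \hh$. For a normal $N$ with spectral measure $E$, writing $N = \int z\, \D E(z)$ and discretising over a partition $\{\Delta_k\}$ of $\sigma(N)$ with sample points $z_k$ rewrites this quadratic form as $\sum_k \|E(\Delta_k)\sum_i \bar z_k^i h_i\|^2 \Ge 0$, so the positivity is automatic. To transport it across a strong limit $T$ of normals $N_\alpha$, I would use the Banach--Steinhaus theorem to get norm-boundedness of $\{N_\alpha\}$ and then the joint strong continuity of multiplication on norm-bounded sets to conclude $N_\alpha^k h_i \to T^k h_i$ for all $k$ and $i$; the quadratic form for $T$ inherits nonnegativity, and $T$ is subnormal.

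For the reverse inclusion, let $T$ be subnormal on $\hh$ with minimal normal extension $N \in \ogr{\kk}$, where $\hh \subseteq \kk$. I would aim to construct normals $M_\alpha \in \ogr{\hh}$ with $M_\alpha \to T$ strongly in three steps: (i) realise $N$ via the spectral theorem as multiplication by the independent variable on a direct integral over $\sigma(N)$; (ii) approximate $N$ in operator norm by normals $\widetilde N_\sigma \in \ogr{\kk}$ with finite spectrum, obtained by partitioning $\sigma(N)$ into small Borel pieces and collapsing each piece to a chosen sample point; (iii) transfer each $\widetilde N_\sigma$ from $\kk$ down to $\hh$ to obtain the desired normal $M_\alpha$ on $\hh$ itself.

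The hard part will be step (iii), because $\hh$ is not in general invariant under $\widetilde N_\sigma$ or under the spectral projections of $N$, so a naive compression $P_\hh \widetilde N_\sigma|_\hh$ need not be normal. Bishop's own solution was to exploit the positive-definite kernel characterising subnormality and to construct, for each finite set of test vectors $F \subset \hh$ and tolerance $\varepsilon > 0$, a normal $M \in \ogr{\hh}$ whose action agrees with $T$ on $F$ to within $\varepsilon$, built from a Gram--Schmidt-like orthonormal basis of $\hh$ adapted to the chosen partition of $\sigma(N)$. An alternative, available when $\hh$ is separable infinite-dimensional, is the inflation trick: since $\hh^{(\infty)} \cong \hh \cong \kk^{(\infty)}$, one may conjugate the approximants of $N^{(\infty)}$ by a suitable unitary into $\ogr{\hh}$ and verify that the image converges strongly to the representative of $T$. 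In either route, the delicate verification is that normality survives the transfer and that the strong convergence on a finite test set extends to all of $\hh$.
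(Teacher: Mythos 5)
There is a genuine gap in each inclusion, and it is worth noting first that the paper does not prove this theorem at all: it is quoted from \cite{Bis57} and \cite[Theorem~II.1.17]{Co91}, and the in-paper surrogate for its proof is Lemma~\ref{jesd}, which abstracts Bishop's construction. Measured against that, the inclusion you call hard is exactly where your proposal stops short: steps (i)--(ii) (direct-integral realisation and norm-approximation of $N$ by finite-spectrum normals) are harmless but do not touch the real difficulty, and step (iii), which you correctly identify as the crux, is only gestured at (``Bishop's own solution'', a ``Gram--Schmidt-like basis'', an ``inflation trick'' available for separable $\hh$ --- although the statement carries no separability hypothesis). The clean argument needs no spectral discretisation at all: if $\dim\hh<\aleph_0$, every subnormal operator is normal and there is nothing to prove; otherwise take the minimal normal extension $N\in\ogr{\kk}$ of $T$, so that $\kk=\bigvee\{N^{*k}h\colon h\in\hh,\ k\Ge0\}$ and hence $\dim\kk=\dim\hh$, let $\varSigma$ be the set of nonzero finite-dimensional subspaces $\tau\subseteq\hh$ ordered by inclusion, choose for each $\tau$ a unitary $U_{\tau}\in\ogr{\hh,\kk}$ with $U_{\tau}h=h$ for $h\in\tau+T(\tau)$, and set $T_{\tau}=U_{\tau}^{-1}NU_{\tau}$. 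Each $T_{\tau}$ is normal on $\hh$ and $T_{\tau}h=Th$ for $h\in\tau$, so the net $\{T_{\tau}\}_{\tau\in\varSigma}$ converges strongly to $T$ (with $\|T_{\tau}\|=\|N\|=\|T\|$ for every $\tau$). This is precisely Lemma~\ref{jesd} applied to the unitarily invariant class of normal operators; without this, or an equally explicit transfer argument, your second inclusion is not proved.

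In the first inclusion the gap is the appeal to Banach--Steinhaus: the uniform boundedness principle yields norm-boundedness of $\{N_{\alpha}\}$ only when $\alpha$ runs through a sequence. A strongly convergent net need not be norm bounded (index pairs $(F,n)$ with $F\subseteq\hh$ finite and $n\Ge1$, and put $A_{F,n}=nP_F$, where $P_F$ is the projection onto a unit vector orthogonal to the span of $F$; then $A_{F,n}\to0$ strongly while $\|A_{F,n}\|=n$), and without a norm bound you cannot conclude $N_{\alpha}^{k}h\to T^{k}h$, so the Bram--Halmos form need not pass to the limit. Since ``strong closure'' means closure under nets, as written you have only shown that the \emph{sequential} strong closure of the normals consists of subnormal operators. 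Note that the paper is explicitly careful about this very point in its own setting: in Theorem~\ref{liumyt}(iii) the hypothesis $\sup_{\tau\in\varSigma}\cov{T_{\tau}}<\infty$ is imposed precisely so that \eqref{tles} gives $\sup_{\tau\in\varSigma}\|T_{\tau}\|<\infty$ before letting $\tau\to\infty$, and that hypothesis is dropped only in the sequential statement, Theorem~\ref{stlymbu}, where the uniform boundedness principle applies. To complete your half of the theorem you must either give an argument valid for unbounded nets, as in the cited proof in \cite{Co91}, or show that an operator in the strong closure of the normals already lies in the strong closure of a norm-bounded family of normals; neither is automatic, and your proposal supplies neither.
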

In other words, an operator $T\in \ogr{\hh}$ is
subnormal if and only if there exists a net of normal
operators on $\hh$ strongly convergent to $T$.

There is another well-known class of operators called
$2$-isometries introduced by Agler in \cite{Ag90} and
intensively studied by Agler and Stankus in the
trilogy \cite{Ag-St95-6}. It resembles the class of
subnormal operators in the sense that each
$2$-isometry has an extension to a Brownian unitary.
In other words, $2$-isometries play the role of
subnormal operators, while Brownian unitaries play the
role of normal operators. Following \cite{Ag90}, we
say that an operator $T\in \ogr{\hh}$ is a {\em
$2$-isometry} if
   \begin{align*}
I-2T^*T+T^{*2}T^{2}=0.
   \end{align*}
Before defining Brownian unitary, recall that the {\em
covariance} of an operator $A\in \ogr{\hh}$ is defined
by $\cov{A}=\sqrt{\|A^*A-I\|}$. An operator $T$ is
said to be a {\em Brownian unitary of covariance}
$\sigma \in (0,\infty)$ if $T$ has a
matrix representation $T=\big[\begin{smallmatrix} V & \sigma  E \\
0 & U\end{smallmatrix}\big]$ relative to some
nontrivial orthogonal decomposition $\hh=\hh_1\oplus
\hh_2$, where $V\in \ogr{\hh_1}$ and
$E\in\ogr{\hh_2,\hh_1}$ are isometries such that
$\hh_1=\ob{V}\oplus \ob{E}$, and $U\in \ogr{\hh_2}$ is
unitary (see \cite[Proposition~5.12]{Ag-St95-6}).
Clearly, we have
   \begin{align*}
\text{$1 \Le \dim \hh_2 \Le \dim \hh_1 = \dim \hh$ and
$\dim \hh \Ge \aleph_0.$}
   \end{align*}
By a {\em Brownian unitary of covariance} $\sigma=0$,
we mean any unitary operator $T$. In both cases,
$\cov{T}=\sigma$. Apparently, the class of Brownian
unitaries is strictly contained in the class of
$2$-isometries. That the inclusion is strict can be
demonstrated with the help of a $2$-isometric
unilateral weighted shift $T$ with the weight sequence
$\{\sigma_n (\lambda)\}_{n=0}^{\infty}$, where
$\lambda \in (1,\infty)$ and
   \begin{align*}
\sigma_n (\lambda) = \sqrt{\frac{1+(n+1)
(\lambda^2-1)} {1+n (\lambda^2-1)}}, \quad n \Ge 0,
   \end{align*}
(see \cite[Lemma~6.1]{j-s01}), by showing that $T$
does not satisfy condition (iii) of
Theorem~\ref{chyrby}.

The aforementioned fundamental relationship between
$2$-isometries and Brownian unitaries is outlined
below (cf. Lemma~\ref{czw}).
   \begin{thm}[\mbox{\cite[Proposition~5.79 \& Theorem~5.80]{Ag-St95-6}}] \label{43}
An operator $T\in \ogr{\hh}$ is a $2$-isometry if and
only if $T$ extends to a Brownian unitary $R\in
\ogr{\kk}$, i.e., $\hh \subseteq \kk$ and $Th=Rh$ for
all $h\in \hh$. Moreover, for $\sigma \in [0,\infty)$,
any $2$-isometry of covariance less than or equal to
$\sigma$ extends to a Brownian unitary of covariance
$\sigma$.
   \end{thm}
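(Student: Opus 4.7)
My plan splits the theorem into the easy sufficiency and the harder necessity, with the moreover clause strengthening the latter.

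For sufficiency, I would start from a Brownian unitary $R = \bigl[\begin{smallmatrix} V & \sigma E \\ 0 & U\end{smallmatrix}\bigr]$ on $\kk = \kk_1 \oplus \kk_2$ and expand $R(h_1 \oplus h_2)$ and $R^2(h_1 \oplus h_2)$ directly. Using $V^* E = 0$ (which encodes $\ob V \perp \ob E$) together with the isometry of $V, E$ and unitarity of $U$, I expect $\|Rh\|^2 = \|h_1\|^2 + (1+\sigma^2)\|h_2\|^2$ and $\|R^2 h\|^2 = \|h_1\|^2 + (1+2\sigma^2)\|h_2\|^2$, so that $\|R^2 h\|^2 - 2\|Rh\|^2 + \|h\|^2 = 0$ and $R$ is a $2$-isometry. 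Since this latter identity characterises the $2$-isometry relation and is stable under restriction to invariant subspaces, $T = R|_{\hh}$ is then a $2$-isometry as well.

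For necessity and the moreover clause, I would first set $Q := T^*T - I$. The $2$-isometry relation rearranges to $T^*QT = Q$, which iterates to $T^{*n} Q T^n = Q$ for all $n$. Evaluating in the inner product with $h$ yields
\begin{align*}
\|T^n h\|^2 - \|h\|^2 = n \langle Qh, h\rangle, \qquad h \in \hh,\; n \in \nbb,
\end{align*}
and the non-negativity of the left-hand side forces $\langle Qh, h\rangle \ge 0$, hence $Q \ge 0$ and $\cov T^2 = \|Q\|$. For any prescribed $\sigma \ge \cov T$, both $Q^{1/2}$ and $(\sigma^2 I - Q)^{1/2}$ are well defined and will serve as the main building blocks of the extension.

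The heart of the proof is the construction itself. Guided by the target block form, I would build $\kk \supseteq \hh$ with an orthogonal decomposition $\kk = \kk_1 \oplus \kk_2$ and operators $V \in \ogr{\kk_1}$, $E \in \ogr{\kk_2, \kk_1}$, $U \in \ogr{\kk_2}$ such that $V, E$ are isometries with $\ob V \oplus \ob E = \kk_1$, $U$ is unitary, and the resulting $R = \bigl[\begin{smallmatrix} V & \sigma E \\ 0 & U\end{smallmatrix}\bigr]$ restricts to $T$ on the embedded copy of $\hh$. Because $T$ is expansive but in general not an isometry, $\hh$ cannot lie inside $\kk_1$ alone; instead each $h \in \hh$ must embed as $h^{(1)} \oplus h^{(2)} \in \kk_1 \oplus \kk_2$ whose $\kk_2$-component encodes $(1/\sigma) Q^{1/2} h$, while the slack $(\sigma^2 I - Q)^{1/2}$ is absorbed by enlarging $\kk_2$. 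A workable concrete model takes $\kk_2$ as a countable direct sum of copies of an auxiliary Hilbert space of dimension at least $\dim \hh$, with $U$ realised as a bilateral shift and $V, E$ defined blockwise so that $R$ acts on the embedded $\hh$ exactly as $T$.

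The main obstacle is this last stage: producing a single extension that simultaneously restricts to $T$, fits the rigid block form with orthogonally complementary ranges of $V$ and $E$, and has covariance \emph{exactly} the prescribed $\sigma$ rather than merely $\cov T$. The embedded $\hh$ sits transversely across $\kk_1$ and $\kk_2$, so the identities $V^*V = I$, $E^*E = I$, $V^*E = 0$, $VV^* + EE^* = I_{\kk_1}$, $U^*U = UU^* = I_{\kk_2}$, and $R|_\hh = T$ have to be verified by direct operator computation on $Q^{1/2}$ and $(\sigma^2 I - Q)^{1/2}$. The dimension bounds $1 \le \dim \kk_2 \le \dim \kk_1$ with $\dim \kk \ge \aleph_0$ must be respected throughout; I expect the bulk of the technical work, handled via the Agler--Stankus wandering-subspace machinery, to reside precisely here.
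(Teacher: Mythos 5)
The paper itself offers no proof of Theorem~\ref{43}: it is quoted from Agler--Stankus, and later arguments (e.g.\ the proof of Theorem~\ref{liumyt}) simply invoke \cite[Theorem~5.80]{Ag-St95-6} ``and its proof''. So your proposal has to stand on its own, and its first half does: the computations $\|R(h_1\oplus h_2)\|^2=\|h_1\|^2+(1+\sigma^2)\|h_2\|^2$ and $\|R^2(h_1\oplus h_2)\|^2=\|h_1\|^2+(1+2\sigma^2)\|h_2\|^2$ are correct (all cross terms vanish because $\ob{V}\perp\ob{E}$), and the identity $\|R^2h\|^2-2\|Rh\|^2+\|h\|^2=0$ obviously passes to invariant subspaces, so restrictions of Brownian unitaries are $2$-isometries. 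The identities $T^*\triangle_TT=\triangle_T$ and $\|T^nh\|^2-\|h\|^2=n\is{\triangle_Th}{h}$ are also fine, but your justification of $\triangle_T\Ge 0$ (``non-negativity of the left-hand side'') is circular as stated; the correct reason is that $n\is{\triangle_Th}{h}=\|T^nh\|^2-\|h\|^2\Ge-\|h\|^2$ for every $n$, so dividing by $n$ and letting $n\to\infty$ gives $\is{\triangle_Th}{h}\Ge 0$.

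The genuine gap is that the converse direction, which is the entire content of the theorem, is never actually proved. You correctly identify the constraints a Brownian unitary extension $R=\bigl[\begin{smallmatrix} V & \sigma E\\ 0 & U\end{smallmatrix}\bigr]$ on $\kk_1\oplus\kk_2$ must satisfy: writing the embedding as $Jh=J_1h\oplus J_2h$, one is forced (for $\sigma>0$) to have $\sigma^2J_2^*J_2=\triangle_T$, $J_1^*J_1=\sigma^{-2}(\sigma^2-\triangle_T)$, $UJ_2=J_2T$ and $VJ_1+\sigma EJ_2=J_1T$; moreover $T^*\triangle_TT=\triangle_T$ does supply the natural candidate for $U$, since $\triangle_T^{1/2}h\mapsto\triangle_T^{1/2}Th$ is isometric on $\overline{\ob{\triangle_T^{1/2}}}$ and extends to a unitary on a larger $\kk_2$. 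But the decisive steps --- constructing $\kk_1$, $V$, $E$ and $J_1$ so that the intertwining relation holds with $V,E$ isometries satisfying $\ob{V}\oplus\ob{E}=\kk_1$, and verifying $R|_{\hh}=T$ on the embedded copy of $\hh$ --- are only announced (``a workable concrete model'', operators ``defined blockwise''), and you explicitly defer ``the bulk of the technical work'' to ``the Agler--Stankus wandering-subspace machinery'', i.e.\ to the very result being proved. That construction is the substance of \cite[Theorem~5.80]{Ag-St95-6} and occupies several pages there; without it the proposal is a plan, not a proof. By contrast, the point you single out as the main obstacle --- achieving covariance exactly $\sigma$ --- is automatic: once $R$ has the block form with parameter $\sigma>0$ and $\kk_2\neq\{0\}$, one has $\triangle_R=0\oplus\sigma^2I$, so $\cov{R}=\sigma$; and for $\sigma=0$ the claim reduces to the standard unitary extension of an isometry.
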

The main goal of this paper is to prove a Bishop-like
theorem (or theorems) for $2$-isometries with Brownian
unitaries playing the role of normal operators. The
most general result of this kind, without dimensional
constraints, is as follows:
   \begin{thm} \label{liumyt}
If $T\in \ogr{\hh}$, then the following conditions are
equivalent{\em :}
   \begin{enumerate}
   \item[(i)] $T$ is a $2$-isometry,
   \item[(ii)] $T$ is the
strong limit of a net $\{T_{\tau}\}_{\tau\in
\varSigma} \subseteq \ogr{\hh}$ of Brownian unitaries
such that $\cov{T_{\tau}} = \cov{T}$ for every
$\tau\in \varSigma$,
   \item[(iii)] $T$ is the
strong limit of a net $\{T_{\tau}\}_{\tau\in
\varSigma} \subseteq \ogr{\hh}$ of Brownian unitaries
such that $\sup_{\tau\in \varSigma} \cov{T_{\tau}}
 < \infty$.
   \end{enumerate}
Moreover, for every $\sigma\in [0,\infty)$, the class
of $2$-isometries on $\hh$ of covariance less than or
equal to $\sigma$ is equal to the strong closure of
the class of Brownian unitaries on $\hh$ of covariance
less than or equal to $\sigma$.
   \end{thm}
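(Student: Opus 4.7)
The plan is to establish the cycle (i) $\Rightarrow$ (ii) $\Rightarrow$ (iii) $\Rightarrow$ (i), and then strengthen the first implication to obtain the ``moreover'' clause. The implication (ii) $\Rightarrow$ (iii) is immediate.

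For (iii) $\Rightarrow$ (i), observe that any Brownian unitary $S$ satisfies $\|S\|^2 = \|S^*S\| \Le 1 + \cov{S}^2$, so $\sup_{\tau}\cov{T_{\tau}} < \infty$ yields a uniform norm bound on the net $\{T_{\tau}\}$. Uniform boundedness combined with $T_{\tau} \to T$ strongly implies $T_{\tau}^2 x \to T^2 x$ for every $x \in \hh$ via $T_{\tau}^2 x - T^2 x = T_{\tau}(T_{\tau} x - Tx) + (T_{\tau} - T)(Tx)$. By Theorem~\ref{43}, each $T_{\tau}$ is a $2$-isometry, so $\|x\|^2 - 2\|T_{\tau} x\|^2 + \|T_{\tau}^2 x\|^2 = 0$ for all $x$. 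Passing to the limit yields $\|x\|^2 - 2\|Tx\|^2 + \|T^2 x\|^2 = 0$ for every $x$, and since the self-adjoint operator $I - 2T^*T + T^{*2}T^2$ has vanishing quadratic form, it is zero; thus $T$ is a $2$-isometry.

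The main content is (i) $\Rightarrow$ (ii), where the plan adapts Bishop's original idea, using the Brownian unitary extension in place of the normal extension and a family of unitary conjugations in place of compressions. Set $\sigma := \cov{T}$. If $\hh$ is finite-dimensional, a short trace argument using the identity $T^*T - I = T^*(TT^* - I)T$ (which is a direct consequence of the $2$-isometry equation) forces $T$ to be unitary, so the constant net $T_{\tau} \equiv T$ works. Otherwise $\hh$ is infinite-dimensional; invoke Theorem~\ref{43} to extend $T$ to a Brownian unitary $R \in \ogr{\kk}$ of covariance $\sigma$, arranging (by taking a minimal extension, or by using that the Agler--Stankus construction adjoins a space whose dimension does not exceed $\dim \hh$) that $\dim \kk = \dim \hh$. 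For each finite $F \subseteq \hh$, form the finite-dimensional subspace $E_F := \mathrm{span}(F \cup T(F))$, viewed also inside $\kk$ via the inclusion $\hh \subseteq \kk$. Since $\dim(\hh \ominus E_F) = \dim(\kk \ominus E_F)$, the isometry $E_F \hookrightarrow \kk$ extends to a unitary $V_F : \hh \to \kk$, and we put $T_F := V_F^* R V_F \in \ogr{\hh}$. Then $T_F$ is unitarily equivalent to $R$, hence a Brownian unitary on $\hh$ of covariance $\sigma$. For $x \in F$, the relation $V_F x = x$ in $\kk$ gives $R V_F x = T x \in E_F$, and then $V_F^*(Tx) = Tx$ yields $T_F x = T x$ \emph{exactly}. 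Directing the net $\{T_F\}$ by inclusion of finite subsets of $\hh$ thus produces the required strong convergence to $T$.

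For the ``moreover'' statement, the inclusion ``$\subseteq$'' is immediate from the refined (i) $\Rightarrow$ (ii) above applied to a $2$-isometry $T$ with $\cov{T} \Le \sigma$, since the approximants there have covariance equal to $\cov{T} \Le \sigma$. The reverse inclusion ``$\supseteq$'' follows from (iii) $\Rightarrow$ (i) together with the observation that $\langle (T_{\tau}^*T_{\tau} - I)x, x\rangle \Le \sigma^2 \|x\|^2$ passes to the limit to give $\langle (T^*T - I)x, x\rangle \Le \sigma^2 \|x\|^2$, which, combined with $T^*T \Ge I$ (automatic for $2$-isometries), yields $\cov{T} \Le \sigma$. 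The main technical difficulty lies in the dimension-matching step needed to construct $V_F$; the cleanest way around it is to ensure, at the outset, that the Brownian extension $\kk$ has the same cardinal dimension as $\hh$, which amounts to a careful reading of the Agler--Stankus construction and is the place where the infinite-dimensionality of $\hh$ (whenever $\sigma > 0$) enters essentially.
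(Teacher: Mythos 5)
Your proposal is correct and takes essentially the same route as the paper: your finite-subset unitary-conjugation construction is exactly the paper's Lemma~\ref{jesd} (applied to the class of Brownian unitaries of covariance $\cov{T}$, which is closed under unitary equivalence by Proposition~\ref{apyyqe}), the dimension- and covariance-matched extension is obtained, as in the paper, from the proof of Agler--Stankus' Theorem~5.80, and your arguments for (iii)$\Rightarrow$(i) and the ``moreover'' part are concrete versions of the paper's appeals to \cite[Lemma~1]{CoHad83} and Lemma~\ref{cuw}. The only slip is the parenthetical identity in your finite-dimensional case, which should read $T^*T-I=T^*(T^*T-I)T$ (i.e.\ $\triangle_T=T^*\triangle_T T$) rather than $T^*T-I=T^*(TT^*-I)T$; the fact needed there (that $2$-isometries on finite-dimensional spaces are unitary) is standard and is likewise used without proof in the paper.
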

For separable Hilbert spaces, the above topological
characterization of $2$-isome\-tries takes the
following form:
   \begin{thm} \label{stlymbu}
Suppose that $\hh$ is a separable Hilbert space and
$T\in \ogr{\hh}$. Then the following conditions are
equivalent{\em :}
   \begin{enumerate}
   \item[(i)] $T$ is a $2$-isometry,
   \item[(ii)]  $T$ is the
strong limit of a sequence $\{T_n\}_{n=1}^{\infty}
\subseteq \ogr{\hh}$ of Brownian unitaries such that
$\cov{T_n} = \cov{T}$ for every integer $n\Ge 1$,
   \item[(iii)] $T$ is the
strong limit of a sequence $\{T_n\}_{n=1}^{\infty}
\subseteq \ogr{\hh}$ of Brownian unitaries.
   \end{enumerate}
   \end{thm}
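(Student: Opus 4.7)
\medskip

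\noindent\textbf{Proof plan.} The trivial implication is (ii)$\Rightarrow$(iii). For (iii)$\Rightarrow$(i), given a sequence $\{T_n\}$ of Brownian unitaries with $T_n \to T$ strongly, the Banach--Steinhaus theorem yields $M := \sup_n \|T_n\| < \infty$, and hence $\cov{T_n}^2 = \|T_n^*T_n-I\| \le M^2+1$. Thus $\sup_n \cov{T_n} < \infty$, and (i) follows at once from the implication (iii)$\Rightarrow$(i) of Theorem~\ref{liumyt}. (Alternatively, one can verify the $2$-isometry equation by passing to the limit in $\|T_n^2 h\|^2 - 2 \|T_n h\|^2 + \|h\|^2 = 0$, using the fact that uniform boundedness of $\{T_n\}$ together with $T_n \to T$ strongly gives $T_n^2 \to T^2$ strongly.)

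\smallskip

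The substantive content is (i)$\Rightarrow$(ii). The plan is to invoke Theorem~\ref{liumyt}(ii) to produce a net $\{S_\tau\}_{\tau\in\varSigma}$ of Brownian unitaries with $\cov{S_\tau} = \cov{T}$ converging to $T$ strongly, and then to extract a sequence from this net by a metrizability argument. The uniform bound comes for free: since every Brownian unitary $S_\tau$ in the net satisfies
\begin{equation*}
\|S_\tau\|^2 = \|S_\tau^*S_\tau\| \Le 1 + \|S_\tau^*S_\tau - I\| = 1 + \cov{T}^2,
\end{equation*}
the net sits in the closed ball $\ogr{\hh}_R := \{A\in \ogr{\hh} : \|A\| \Le R\}$ of radius $R = \sqrt{1+\cov{T}^2}$.

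\smallskip

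Here the separability hypothesis enters decisively. Fixing an orthonormal basis $\{e_k\}_{k=1}^\infty$ of $\hh$, the formula
\begin{equation*}
d(A,B) = \sum_{k=1}^{\infty} 2^{-k} \min\bigl(1, \|(A-B)e_k\|\bigr)
\end{equation*}
defines a metric on $\ogr{\hh}_R$ whose induced topology coincides with the strong operator topology restricted to $\ogr{\hh}_R$. Consequently SOT is first countable on $\ogr{\hh}_R$, so the SOT closure of the bounded set $\{S_\tau : \tau\in\varSigma\}$ coincides with its sequential SOT closure. This furnishes a sequence of Brownian unitaries $\{T_n\}_{n=1}^\infty$, drawn from $\{S_\tau\}$, with $\cov{T_n} = \cov{T}$ and $T_n \to T$ strongly, which is exactly (ii).

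\smallskip

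The main (and only) technical point is the metrizability of SOT on norm-bounded subsets of $\ogr{\hh}$ when $\hh$ is separable; once that standard fact is in hand, the theorem is an essentially formal corollary of Theorem~\ref{liumyt}.
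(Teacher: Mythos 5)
Your proposal is correct. The implications (ii)$\Rightarrow$(iii) and (iii)$\Rightarrow$(i) are handled exactly as in the paper: a sequence is a net, and the uniform boundedness principle together with the estimate \eqref{tlex} supplies the bound $\sup_n\cov{T_n}<\infty$ needed to invoke the implication (iii)$\Rightarrow$(i) of Theorem~\ref{liumyt}. For (i)$\Rightarrow$(ii) you take a genuinely different route from the paper's. The paper does not treat Theorem~\ref{liumyt}(ii) as a black box; instead it re-enters the construction of Lemma~\ref{jesd}, whose ``moreover'' part produces the sequence explicitly by choosing the directed family of finite-dimensional subspaces along $\tau_n=\bigvee\{e_j\}_{j=1}^n$ for a fixed orthonormal basis, so that $T_n=U_{\tau_n}^{-1}RU_{\tau_n}$ is a concrete sequence of Brownian unitaries with $\cov{T_n}=\cov{R}=\cov{T}$ converging strongly to $T$. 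You instead use the net furnished by Theorem~\ref{liumyt}(ii) as given, note that the constant covariance forces the whole net (and $T$ itself, e.g.\ by strong lower semicontinuity of the norm, or because the closed ball is strongly closed) into the ball of radius $\sqrt{1+\cov{T}^2}$, and then extract a sequence via the standard fact that the strong operator topology is metrizable on norm-bounded subsets of $\ogr{\hh}$ when $\hh$ is separable, so closures and sequential closures agree there. Both arguments are sound; the paper's is self-contained and constructive (the sequence is explicit and indexed by the basis), while yours is shorter at the cost of importing the metrizability fact, and it has the mild virtue of applying verbatim to any class for which the net version of the approximation theorem is already known, without revisiting its proof.
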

Using the Conway-Hadwin theorem (see
\cite[Theorem]{CoHad83}), we can obtain Bishop-like
theorems not only for strong topology, but also for
weak and $^*$-strong topology.
   \begin{thm} \label{cymprs}
Let $\hh$ be an infinite-dimensional separable Hilbert
space, $T\in \ogr{\hh}$ and $\sigma \in [0,\infty)$.
Then
   \begin{enumerate}
   \item[(i)] $T$ is the
strong limit of a sequence $\{T_n\}_{n=1}^{\infty}
\subseteq \ogr{\hh}$ of Brownian unitaries of
covariance $\sigma$ if and only if $T$ is a
$2$-isometry with $\cov{T} \Le \sigma$,
   \item[(ii)] $T$ is the
weak limit of a sequence $\{T_n\}_{n=1}^{\infty}
\subseteq \ogr{\hh}$ of Brownian unitaries of
covariance $\sigma$ if and only if $T$ is the
compression of a Brownian unitary of covariance
$\sigma$,
   \item[(iii)] $T$ is the $^*$-strong limit of a
sequence $\{T_n\}_{n=1}^{\infty} \subseteq \ogr{\hh}$
of Brownian unitaries of covariance $\sigma$ if and
only if $T$ is either a unitary operator or a Brownian
unitary of covariance $\sigma$.
   \end{enumerate}
   \end{thm}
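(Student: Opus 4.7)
The plan is to invoke the Conway--Hadwin theorem \cite{CoHad83}, which in the separable infinite-dimensional setting characterizes the strong, weak, and $^*$-strong closures of the unitary orbit of an operator $A$ as, respectively, the operators unitarily equivalent to a restriction of $A^{(\infty)}$ to an invariant subspace, a compression of $A^{(\infty)}$, and a restriction of $A^{(\infty)}$ to a reducing subspace (a direct summand). The key preliminary observation is that the infinite ampliation of a Brownian unitary $R$ of covariance $\sigma$ retains its block matrix form $\bigl[\begin{smallmatrix} V^{(\infty)} & \sigma E^{(\infty)} \\ 0 & U^{(\infty)}\end{smallmatrix}\bigr]$ and is again a Brownian unitary of covariance $\sigma$; the entire unitary orbit of $R^{(\infty)}$ therefore sits inside the class of Brownian unitaries of covariance $\sigma$. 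By choosing $R$ sufficiently universal (with $\dim\hh_2=\aleph_0$ and a diagonal unitary $U$ of maximal spectral type and infinite multiplicity), every Brownian unitary of covariance $\sigma$ on a separable infinite-dimensional Hilbert space becomes unitarily equivalent to a direct summand of $R^{(\infty)}$, so that approximation by the whole class reduces to approximation inside the single orbit of $R^{(\infty)}$.

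For part (i), the forward implication is immediate from Theorem~\ref{liumyt}: strong limits of Brownian unitaries of covariance $\sigma$ are in particular strong limits of Brownian unitaries of covariance $\Le\sigma$, hence $2$-isometries of covariance $\Le\sigma$. For the converse, Theorem~\ref{43} extends $T$ to a Brownian unitary $R$ of covariance $\sigma$ on some $\kk\supseteq\hh$, exhibiting $T$ as the restriction of $R$ (and hence of $R^{(\infty)}$) to the invariant subspace $\hh$; Conway--Hadwin places $T$ in the strong closure of the unitary orbit of $R^{(\infty)}$, which is contained in the class of Brownian unitaries of covariance $\sigma$.

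For part (ii), the converse direction is analogous: a compression of a Brownian unitary of covariance $\sigma$ is a compression of $R^{(\infty)}$, hence a weak limit of its unitary orbit. For the forward direction, with $R$ universal every Brownian unitary of covariance $\sigma$ is a direct summand, and in particular a compression, of $R^{(\infty)}$, and thus already lies in the weak closure of the orbit of $R^{(\infty)}$; since this set is weakly closed, any weak limit of such operators stays inside it and is, by Conway--Hadwin, a compression of $R^{(\infty)}$---itself a Brownian unitary of covariance $\sigma$.

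Part (iii) is the most delicate. For the converse, a Brownian unitary of covariance $\sigma$ is trivially its own $^*$-strong limit, while a unitary $T$ extends, by Theorem~\ref{43}, to a Brownian unitary $R$ of covariance $\sigma$; the equality $\|Th\|=\|h\|$ combined with the general bound $R^*R\Ge I$ for $2$-isometries forces $(R^*R-I)h=0$, whence $R^*\hh\subseteq\hh$, so $\hh$ reduces $R$ and $T$ is a direct summand of $R$, and Conway--Hadwin yields the $^*$-strong approximation. The forward direction hinges on the structural lemma that every direct summand of a Brownian unitary of covariance $\sigma$ is either a unitary or a Brownian unitary of covariance $\sigma$, and proving this lemma is the main obstacle. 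The strategy exploits that $R^*R=I_{\hh_1}\oplus(1+\sigma^2)I_{\hh_2}$ has spectrum $\{1,1+\sigma^2\}$ (when $\sigma>0$), so any reducing subspace $\mcal$ of $R$ commutes with the spectral projections of $R^*R$ and therefore splits as $\mcal=\mcal_1\oplus\mcal_2$ with $\mcal_i\subseteq\hh_i$; a direct check then shows that $R|_\mcal$ inherits the Brownian unitary block structure, with isometries $V|_{\mcal_1}$, $E|_{\mcal_2}$ and unitary $U|_{\mcal_2}$ satisfying the required range condition, degenerating to a unitary precisely when $\mcal_2=\{0\}$.
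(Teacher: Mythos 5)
There is a genuine gap, and it sits exactly where you put the weight: the claim that a single ``sufficiently universal'' Brownian unitary $R$ of covariance $\sigma$ can be chosen so that every Brownian unitary of covariance $\sigma$ on a separable infinite-dimensional space is unitarily equivalent to a direct summand of $R^{(\infty)}$. This is false. If $T'=\bigl[\begin{smallmatrix} V' & \sigma E' \\ 0 & U'\end{smallmatrix}\bigr]$ is unitarily equivalent to a restriction of $R^{(\infty)}$ to a reducing subspace, then (for $\sigma>0$) the canonical decomposition is preserved and $U'$ must be unitarily equivalent to a restriction of $U^{(\infty)}$ to a reducing subspace, hence its spectral measure is absolutely continuous with respect to that of $U$. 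But the unitary entry of a Brownian unitary of covariance $\sigma$ can be an arbitrary unitary, and no Borel measure on $\tbb$ dominates all others (one can always find a probability measure singular to a given one), so no choice of $U$ -- diagonal, ``maximal spectral type'' or otherwise -- is universal; the same obstruction occurs in the unitary part of $V$. Replacing plain unitary equivalence by approximate equivalence does not rescue the claim as stated either (e.g.\ $U'=e^{\I\theta}I$ with $e^{\I\theta}$ outside the eigenvalue set of a diagonal $U$ is not approximately equivalent to any summand of $U^{(\infty)}$); an absorption argument of Voiculescu/Hadwin type would be needed and is not supplied. Moreover, the Conway--Hadwin theorem as cited (Theorem~\ref{cohyd}) is a statement about a \emph{class} closed under approximate equivalence and orthogonal sums, not about the closure of a single unitary orbit, so your single-orbit reformulation is itself an unproved strengthening. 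Consequently the forward (``only if'') implications of (ii) and (iii), which in your scheme rest entirely on this reduction, are not established.

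The paper's route avoids the issue: it applies Theorem~\ref{cohyd} directly to the class $\mathscr{E}_{\sigma}$ of Brownian unitaries of covariance $\sigma$, which by Proposition~\ref{apyyqe} is closed under approximate equivalence and orthogonal sums -- these are the only hypotheses needed, and no universal operator exists or is required; (i) then follows from Lemma~\ref{czw} and Theorem~\ref{43}, (ii) is immediate, and (iii) uses Corollary~\ref{resyte}. The salvageable parts of your write-up are real: part (i) in both directions is fine (your forward direction via Theorem~\ref{liumyt} matches the paper in spirit), the ``if'' directions of (ii) and (iii) are fine once you quote the class version of Conway--Hadwin (your derivation that a unitary $T$ sits in $\hh$ as a reducing subspace of its Brownian extension, via $R^*R\Ge I$ and $\|Rh\|=\|h\|$, is a correct alternative to the paper's use of $T\oplus B$ in Corollary~\ref{resyte}), and your structural lemma -- that a restriction of a Brownian unitary of covariance $\sigma>0$ to a reducing subspace splits along the spectral projections of $R^*R=I\oplus(1+\sigma^2)I$ and is either unitary or again a Brownian unitary of covariance $\sigma$ -- is correct and reproves \cite[Lemma~5.32]{Ag-St95-6}. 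To close the gap, drop the universal-operator reduction and verify instead that $\mathscr{E}_{\sigma}$ satisfies the two closure hypotheses of the Conway--Hadwin theorem.
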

Note that equivalence (i)$\Leftrightarrow$(ii) of
Theorem~\ref{stlymbu} follows from the statement (i)
of Theorem~\ref{cymprs} applied to $\sigma=\cov{T}$
and the fact that $2$-isometries on finite-dimensional
Hilbert spaces are unitary.

Regarding the Conway-Hadwin theorem, note that one of
its two main assumptions is that the considered
abstract class of operators is closed under countable
orthogonal sums. In our case, this is the class of
Brownian unitaries. Surprisingly, it is not closed
under this operation. However, the class of Brownian
unitaries of a fixed covariance is closed under
countable orthogonal sums. This narrower class is also
closed under approximate equivalence, which is the
second main assumption of the Conway-Hadwin theorem
(see Proposition~\ref{apyyqe}).
   \begin{pro} \label{orhsy}
The orthogonal sum $T=\bigoplus_{\omega \in \varOmega}
T_{\omega}$ of operators $T_{\omega}\in
\ogr{\hh_{\omega}}$ is a Brownian unitary if and only
if for every $\omega \in \varOmega$, the operator
$T_{\omega}$ is a Brownian unitary and
$\cov{T_{\omega}}$ equals either $0$ or $\cov{T}$.
   \end{pro}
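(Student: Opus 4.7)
The plan is to translate between the intrinsic Brownian unitary decomposition $\hh = \hh_1 \oplus \hh_2$ and the given orthogonal sum $\hh = \bigoplus_{\omega} \hh_\omega$, using block-matrix arguments in both directions. For sufficiency, let $\sigma := \cov T$ and partition $\varOmega = \varOmega_0 \sqcup \varOmega_\sigma$ according to whether $\cov{T_\omega}$ equals $0$ or $\sigma$. The case $\sigma = 0$ is immediate, every $T_\omega$ being unitary. If $\sigma > 0$, each $T_\omega$ with $\omega \in \varOmega_\sigma$ comes with a prescribed Brownian unitary decomposition $\hh_\omega = \hh_{\omega,1} \oplus \hh_{\omega,2}$ and data $(V_\omega, E_\omega, U_\omega)$; for $\omega \in \varOmega_0$, I declare $\hh_{\omega,1} = \hh_\omega$, $\hh_{\omega,2} = \{0\}$, $V_\omega = T_\omega$, with trivial $E_\omega, U_\omega$. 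Setting $\hh_i := \bigoplus_\omega \hh_{\omega,i}$ and taking $V, E, U$ as the corresponding orthogonal sums then exhibits $T$ in the form $\big[\begin{smallmatrix} V & \sigma E \\ 0 & U \end{smallmatrix}\big]$; the isometry conditions and the identity $\hh_1 = \ob V \oplus \ob E$ propagate summand-by-summand, while $\hh_2 \ne \{0\}$ is guaranteed by $\sigma > 0$ forcing $\varOmega_\sigma \ne \emptyset$.

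For necessity, assume $T = \bigoplus T_\omega$ is a Brownian unitary. If $\cov T = 0$ then $T$ is unitary, which directly forces each $T_\omega$ to be unitary. Suppose $\sigma := \cov T > 0$ and fix a Brownian unitary matrix representation of $T$ on $\hh = \hh_1 \oplus \hh_2$. A direct block computation (using $V^*V = E^*E = U^*U = I$ together with $V^*E = 0$, which follows from the orthogonality of $\ob V$ and $\ob E$) gives $T^*T - I = \sigma^2 P$, where $P$ is the orthogonal projection onto $\hh_2$; hence $P = \sigma^{-2}(T^*T - I)$ lies in the unital $C^*$-algebra generated by $T$. Since every projection $Q_\omega$ onto $\hh_\omega$ commutes with both $T$ and $T^*$, it also commutes with $P$, and I may define $\hh_{\omega,i} := \hh_\omega \cap \hh_i$ to obtain the refined decomposition $\hh_\omega = \hh_{\omega,1} \oplus \hh_{\omega,2}$. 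A bookkeeping check then shows that $V, E, U$ restrict to operators $V_\omega, E_\omega, U_\omega$ on the appropriate components and that these restrictions place $T_\omega$ in Brownian unitary form.

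The main technical step is to verify $\hh_{\omega,1} = \ob{V_\omega} \oplus \ob{E_\omega}$, which I would handle by decomposing any $y \in \hh_{\omega,1}$ as $y = Vx + Ex'$ via $\hh_1 = \ob V \oplus \ob E$, splitting $x, x'$ along $\hh = \bigoplus \hh_{\omega'}$, and invoking uniqueness together with the orthogonality of $\ob V$ and $\ob E$ to force $x \in \hh_{\omega,1}$ and $x' \in \hh_{\omega,2}$. Unitarity of $U_\omega$ is automatic because $U$ preserves each $\hh_{\omega',2}$ individually, so $\hh_{\omega,2}$ reduces $U$. The only genuinely delicate subcase is $\hh_{\omega,2} = \{0\}$, in which $T_\omega = V_\omega$ is a priori only an isometry; here surjectivity of $T$ (built into its Brownian unitary form since $\hh_1 = \ob V \oplus \ob E$ and $U$ is unitary) passes to $T_\omega$ by a coordinatewise preimage argument, so $T_\omega$ is unitary and hence a Brownian unitary of covariance $0$. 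Finally, the relation $T_\omega^* T_\omega - I = \sigma^2 P|_{\hh_\omega}$ gives $\cov{T_\omega} = \sigma$ when $\hh_{\omega,2} \ne \{0\}$ and $\cov{T_\omega} = 0$ otherwise. I expect the compatibility step — forcing the two orthogonal decompositions of $\hh$ to align, which hinges on $P$ lying in the $C^*$-algebra generated by $T$ — to be the main obstacle; the remaining verifications are straightforward block-matrix bookkeeping.
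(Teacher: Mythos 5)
Your argument is correct in outline but takes a genuinely different route from the paper's. The paper never manipulates the block form directly: it runs both implications through the algebraic characterization of Brownian unitaries of positive covariance (Theorem~\ref{chyrby}), noting that conditions (i)--(iv) and the normalized defect $\sigma^{-2}\triangle_{T}$ pass to and from orthogonal summands, and it handles the mixed case by splitting $T=A\oplus B$ into the positive-covariance and unitary parts and invoking \cite[Lemma~5.32]{Ag-St95-6}. You instead work with the defining $2\times 2$ representation and show that the canonical decomposition $\hh=\hh_1\oplus\hh_2$ is compatible with the given orthogonal sum, the key point being that $P_{\hh_2}=\sigma^{-2}(T^*T-I)$ lies in the $C^*$-algebra generated by $T$ and hence commutes with each summand projection $Q_\omega$. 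That computation, the restriction claims for $V,E,U$, the verification $\hh_{\omega,1}=\ob{V_\omega}\oplus\ob{E_\omega}$, and the sufficiency direction all check out; your approach is more self-contained (no appeal to Theorem~5.20 or Lemma~5.32 of Agler--Stankus) at the price of more bookkeeping.

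One step, however, is wrong as stated: a Brownian unitary of positive covariance is never surjective, so the ``surjectivity of $T$'' you invoke in the subcase $\hh_{\omega,2}=\{0\}$ is not available. Indeed, if $0\neq f\in\hh_2$ and $T(x\oplus h)=Ef\oplus 0$, then $Uh=0$ forces $h=0$, and then $Vx=Ef\in\ob{V}\cap\ob{E}=\{0\}$, so $f=0$, a contradiction; thus $Ef\notin\ob{T}$. (Equivalently: a surjective $2$-isometry is invertible, hence unitary, contradicting $\cov{T}>0$.) Fortunately the conclusion you need does not depend on this: your own identity $\hh_{\omega,1}=\ob{V_\omega}\oplus\ob{E_\omega}$, read in the subcase $\hh_{\omega,2}=\{0\}$ (so that $E_\omega$ has zero domain), says precisely that $\hh_\omega=\ob{V_\omega}$, i.e.\ $T_\omega=V_\omega$ is a surjective isometry and hence unitary, a Brownian unitary of covariance $0$. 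With that local repair, and the observation $T_\omega^*T_\omega-I=\sigma^2P_{\hh_2}|_{\hh_\omega}$ giving the covariance dichotomy, your proof is complete.
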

It turns out that the class of Brownian unitaries
behaves quite well with respect to the $^*$-strong
topology.
   \begin{thm} \label{stulyn}
Let $\{T_n\}_{n=1}^{\infty}\subseteq \ogr{\hh}$ be a
sequence of Brownian unitaries which converges
$^*$-strongly to $T\in \ogr{\hh}$. Then $T$ is either
an isometry or a Brownian unitary. Moreover, if
$\cov{T_n}=\cov{T_1}$ for every $n\Ge 1$, then $T$ is
either a unitary operator or a Brownian unitary with
$\cov{T} = \cov{T_1}$.
   \end{thm}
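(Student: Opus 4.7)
The plan is to lift the Brownian block structure of each $T_n$ to the limit $T$ by taking strong-operator limits of the associated projections and corners. By the Uniform Boundedness Principle $\sup_n \|T_n\| < \infty$, hence $\sigma_n := \cov{T_n}$ is bounded; since multiplication is jointly strong-continuous on bounded sets, the $2$-isometry identity passes to the $^*$-strong limit and $T$ is a $2$-isometry. Writing each $T_n = \big[\begin{smallmatrix} V_n & \sigma_n E_n \\ 0 & U_n \end{smallmatrix}\big]$ on $\hh = \hh_1^{(n)} \oplus \hh_2^{(n)}$ yields $T_n^* T_n - I = \sigma_n^2 P_n$, where $P_n$ is the orthogonal projection onto $\hh_2^{(n)}$. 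After passing to a subsequence with $\sigma_n \to s \ge 0$, the right side converges strongly to $T^* T - I$. If $s = 0$ this forces $T^* T = I$, so $T$ is an isometry. If $s > 0$, then $P_n \to Q := s^{-2}(T^* T - I)$ strongly; because $P_n^2 = P_n$ and $P_n^* = P_n$ carry over to the SOT limit, $Q$ is an orthogonal projection; the $2$-isometry identity excludes $Q = I$ (which would give $s^4 I = 0$), and if $Q = 0$ we again conclude $T$ is an isometry. Otherwise $0 \neq Q \neq I$.

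In that remaining case set $\hh_1 := \jd{Q}$, $\hh_2 := \ob{Q}$, $Q' := I - Q$. The relation $P_n T_n (I - P_n) = 0$ (vanishing of the $(2,1)$-block) passes strongly to $QTQ' = 0$, so $\hh_1$ is $T$-invariant and $V := T|_{\hh_1}$ is an isometry of $\hh_1$. The block identities $U_n^* U_n = U_n U_n^* = P_n$, $E_n^* E_n = P_n$ and $V_n V_n^* + E_n E_n^* = I - P_n$ rewrite as polynomials in $T_n$, $T_n^*$, $P_n$, $P_n' = I - P_n$ via $V_n = P_n' T_n P_n'$, $\sigma_n E_n = P_n' T_n P_n$, $U_n = P_n T_n P_n$ (all extended by zero off the relevant block). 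Taking strong limits produces, with $\hat U := QTQ$ and $\hat E := \frac{1}{s}\, Q' T Q$, the relations $\hat U^* \hat U = \hat U \hat U^* = Q$, $\hat E^* \hat E = Q$ and $VV^* + \hat E \hat E^* = Q'$. Restricted to $\hh_2$, $\hat U$ is a unitary of $\hh_2$ and $\hat E \colon \hh_2 \to \hh_1$ is an isometry; restricting the last identity to $\hh_1$ gives $VV^* + EE^* = I_{\hh_1}$, which forces $\ob{V} \oplus \ob{E} = \hh_1$. Hence $T$ is a Brownian unitary of covariance $s$, completing the first assertion.

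For the second assertion, assume $\sigma_n = \sigma := \cov{T_1}$ for all $n$. If $\sigma = 0$, each $T_n$ is unitary, so both $T_n^* T_n = I$ and $T_n T_n^* = I$ pass to the $^*$-strong limit and $T$ is unitary. If $\sigma > 0$, the previous analysis applies with $s = \sigma$, and the only case left to settle is $Q = 0$, i.e.\ $T$ merely an isometry, which must be upgraded to unitary. Here $P_n \to 0$ strongly, so $V_n = P_n' T_n P_n' \to T$ and $V_n^* \to T^*$ strongly, whence $V_n V_n^* \to TT^*$ strongly; consequently the projection $R_n := E_n E_n^* = P_n' - V_n V_n^*$ onto $\ob{E_n}$ converges strongly to $I - TT^*$. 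Suppose toward contradiction that $y \in \jd{T^*}$ satisfies $\|y\| = 1$. Bookkeeping in the block decomposition of $T_n T_n^* - I$ shows that its off-diagonal contribution to $\langle (T_n T_n^* - I)\, y, y\rangle$ is bounded in modulus by $2\sigma \|P_n y\|\, \|y\|$, and therefore vanishes, so
\[
\|T_n^* y\|^2 = 1 + (\sigma^2 - 1)\, \|R_n y\|^2 + o(1).
\]
The left side tends to $\|T^* y\|^2 = 0$; but for every $L \in [0,1]$ one has $1 + (\sigma^2 - 1) L \ge \min(1, \sigma^2) > 0$, a contradiction. Hence $\jd{T^*} = \{0\}$ and $T$ is unitary.

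The main obstacle is precisely this last step. The $2$-isometry identity alone would let a $^*$-strong limit of non-isometric $2$-isometries be any isometry, so one must exploit the Brownian structure. The crucial trick is the double reading of $T_n T_n^* - I$: its off-diagonal entries carry a factor $P_n \to 0$ and so vanish strongly, while its diagonal correction $(\sigma^2 - 1)(P_n' - V_n V_n^*)$ converges strongly to $(\sigma^2 - 1)(I - TT^*)$; pitting this limit against $\|T_n^* y\|^2 \to 0$ forces the projection $I - TT^*$ to vanish.
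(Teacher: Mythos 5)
Your argument is correct, but it follows a genuinely different route from the paper's. The paper never reconstructs the block decomposition of the limit: it invokes the Agler--Stankus algebraic characterization of Brownian unitaries (Theorem~\ref{chyrby}) and checks that its four operator identities survive the $^*$-strong limit via Lemma~\ref{jcty}; the delicate point there is condition (iv), which involves an unknown projection $Q_n$ for each $n$, and is handled by squaring the operators $\nabla_n=(\sigma_n^2-\triangle_{T_n})(T_nT_n^*-I)(\sigma_n^2-\triangle_{T_n})$ so that the $Q_n$ disappear, with the case $\sigma=1$ treated separately; in the ``moreover'' part the isometry-to-unitary upgrade comes from the limiting identity $P_{\jd{T^*}}=(1-\sigma^2)Q$. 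You instead work directly from the definition: you identify the limiting projection $Q=s^{-2}\triangle_T$ of the $P_n$'s, show $\jd{Q}$ is invariant, pass the corner identities $\tilde U_n^*\tilde U_n=\tilde U_n\tilde U_n^*=P_n$, $\tilde E_n^*\tilde E_n=P_n$, $\tilde V_n\tilde V_n^*+\tilde E_n\tilde E_n^*=I-P_n$ to the strong limit, and exclude $Q=I$ via the $2$-isometry identity; for the upgrade to unitarity you use the quantitative bound $\|T_n^*y\|^2=1+(\sigma^2-1)\|R_ny\|^2+o(1)\geqslant\min(1,\sigma^2)+o(1)$ against $\|T_n^*y\|\to\|T^*y\|=0$. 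Your approach buys a self-contained proof (no appeal to Theorem~\ref{chyrby}) and produces the Brownian block structure of the limit explicitly, while entirely sidestepping the troublesome condition (iv); the paper's approach is shorter because the characterization theorem does the structural work, at the cost of the ad hoc squaring trick and the $\sigma=1$ case split. Two small points you leave implicit but which are immediate: along the subsequence with $\sigma_n\to s>0$ one eventually has $\sigma_n>0$, so the block forms of the $T_n$ exist; and $T|_{\jd{Q}}$ is isometric because $\triangle_T=s^2Q$ annihilates $\jd{Q}$.
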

Note that if $\hh$ is separable, then the ``only if''
part of statement (iii) of Theorem~\ref{cymprs} is
equivalent to the ``moreover'' part of
Theorem~\ref{stulyn}.

It is worth pointing out that Theorem~\ref{stulyn}, as
well as Corollary~\ref{unyoub}, are no longer valid if
the $^*$-strong topology is replaced by the strong
topology (see Remark~\ref{nuprth}). It may be the case
that the $^*$-strong limit of a sequence of Brownian
unitaries, each with positive covariance, is a
non-unitary isometry, meaning that the class of
Brownian unitaries is not $^*$-strongly closed. In
fact, it is not closed even in the operator norm
topology (see Example~\ref{sslnv}).

The question of the strong and weak closedness of the
class of $2$-isome\-tries has the following partial
answer.
   \begin{thm} \label{clidr}
The class of $2$-isometries on a Hilbert space $\hh$
is sequentially strongly closed, and if $\hh$ is
infinite-dimensional, it is not sequentially weakly
closed.
   \end{thm}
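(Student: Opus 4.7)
Let $\{T_n\}_{n=1}^{\infty} \subseteq \ogr{\hh}$ be a sequence of $2$-isometries converging in the strong operator topology to some $T\in \ogr{\hh}$. The plan is to pass to the limit in the defining identity $I - 2 T_n^* T_n + T_n^{*2} T_n^2 = 0$. By the Banach-Steinhaus theorem, $M := \sup_n \|T_n\| < \infty$. Writing
\[
T_n^2 h - T^2 h \;=\; T_n (T_n h - T h) \;+\; (T_n - T)(Th), \qquad h\in \hh,
\]
and using the bound $M$ shows that $T_n^2 \to T^2$ strongly. Next, for any $h,k \in \hh$, the identity $\langle T_n^* T_n h, k\rangle = \langle T_n h, T_n k\rangle$ combined with strong convergence of $T_n$ and the uniform bound $M$ gives $T_n^* T_n \to T^* T$ in the weak operator topology; the analogous argument applied to $\{T_n^2\}$ yields $T_n^{*2} T_n^2 \to T^{*2} T^2$ weakly. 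Passing to the weak limit in the $2$-isometry identity produces $I - 2 T^* T + T^{*2} T^2 = 0$, so $T$ is a $2$-isometry.

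\textbf{Failure of sequential weak closedness.} Assume $\hh$ is infinite-dimensional. Pick a closed separable infinite-dimensional subspace $\hh_0 \subseteq \hh$ with orthonormal basis $\{e_k\}_{k=0}^{\infty}$, let $S \in \ogr{\hh_0}$ be the unilateral shift determined by $S e_k = e_{k+1}$, and define
\[
T_n \;:=\; S^n \oplus I \;\in\; \ogr{\hh_0 \oplus \hh_0^{\perp}} \;=\; \ogr{\hh}, \qquad n \Ge 1.
\]
Each $T_n$ is an isometry, hence a $2$-isometry. Since $\langle S^n e_i, e_j\rangle = 0$ whenever $n > j$, a density argument using $\|S^n\| = 1$ shows $S^n \to 0$ in the weak operator topology on $\ogr{\hh_0}$; consequently $T_n \to A := 0 \oplus I$ weakly in $\ogr{\hh}$. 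A direct computation relative to $\hh = \hh_0 \oplus \hh_0^{\perp}$ gives $I - 2 A^* A + A^{*2} A^2 = I \oplus 0 \neq 0$, so $A$ is not a $2$-isometry.

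\textbf{Main obstacle.} The only genuine subtlety lies in the first part: strong convergence of $T_n$ does not force strong convergence of $T_n^*$, so one cannot work directly with strong limits of the products $T_n^* T_n$ and $T_n^{*2} T_n^2$. The remedy is to pass to the weak operator topology for these products, where strong convergence of the left factor together with the uniform norm bound supplied by Banach-Steinhaus suffices. The second part is then a routine unilateral-shift construction, with the mild adjustment of padding by $I$ on $\hh_0^{\perp}$ to cover the non-separable case.
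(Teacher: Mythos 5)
Your argument is correct. For the sequential strong closedness you follow essentially the paper's route: the paper invokes the joint sequential strong continuity of multiplication (its Lemma~\ref{jcty}) to get $T_n^2\to T^2$ strongly and then passes to the limit in the defining identity (in quadratic-form guise $\|h\|^2-2\|T_nh\|^2+\|T_n^2h\|^2=0$), which is the same mechanism as your weak-limit passage via $\is{T_nh}{T_nk}\to\is{Th}{Tk}$; both handle the non-convergence of $T_n^*$ in the same way, so this part is not a genuine divergence. Where you differ is the failure of sequential weak closedness. The paper builds a single $2$-isometry $T=\big[\begin{smallmatrix} V & \sqrt{1-q}\,E_0\\ 0 & \sqrt{q}\,X_0\end{smallmatrix}\big]$ with a strictly contractive lower corner, quotes three external results (that such $T$ is a $2$-isometry, that powers of $2$-isometries are $2$-isometries, and a weak-stability criterion) to conclude $T^n\to 0$ weakly, and notes that $0$ is not a $2$-isometry; this choice also feeds the subsequent remark on weak stability of Brownian unitaries. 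You instead take $T_n=S^n\oplus I$ with $S$ the unilateral shift on a separable infinite-dimensional subspace $\hh_0$: each $T_n$ is an isometry, hence a $2$-isometry, $S^n\to 0$ weakly by the standard matrix-entry plus density argument, so $T_n\to 0\oplus I$ weakly, and $I-2A^*A+A^{*2}A^2=I\oplus 0\neq 0$ for $A=0\oplus I$. Your construction is more elementary and entirely self-contained (no cited structure theorems needed), and the padding by $I$ handles non-separable $\hh$ just as the paper's decomposition does; what the paper's heavier example buys is the additional information that the weakly convergent sequence can be taken to be the power sequence of a single non-isometric $2$-isometry, which is used in its ensuing discussion of stability.
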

Taking into account the above, the following problem
arises.
   \begin{opq}
Is the class of $2$-isometries on an
infinite-dimensional Hilbert space strongly
closed\/{\em ?}
   \end{opq}
Let us recall that the strong closure of the set of
unitary (resp.\ normal) operators is equal to the set
of isometric (resp.\ subnormal) operators (see
\cite[Problem~225]{Hal82} and
\cite[Theorem~3.3]{Bis57}, respectively).
Consequently, the sets of isometric and subnormal
operators are strongly closed. It turns out that the
set of hyponormal operators is strongly closed as well
(see \cite[Problems~226]{Hal82}). The question of weak
closedness of these sets of operators is discussed in
Remark~\ref{ibiu}.

The paper is organized as follows. Section~\ref{Sec.2}
provides some basic properties of the covariance map,
including its strong lower semicontinuity (see
Lemma~\ref{cuw}). Sections~\ref{Sec.3} and \ref{Sec.4}
establish the proofs of the results stated in
Introduction. Section~\ref{Sec.5} gives two examples
showing that unitary operators can be $^*$-strong
limits of sequences of Brownian unitaries of a fixed
positive covariance.

In this paper, we focus on the study of
$2$-isometries. It is natural to ask what can be done
in the context of $m$-isometries where $m\Ge 3$. It
turns out that the situation is much more complex even
for $3$-isometries. The techniques used here for
$2$-isometries are no longer applicable for $m\Ge 3$.
The main reason for this is the lack of algebraic
characterizations of $m$-Brownian unitaries playing a
role somewhat similar to that played by Brownian
unitaries in the case of $2$-isometries (see
\cite{Cr-Su24,Su23}).
   \section{\label{Sec.2}The covariance map}
We begin by recalling the three basic operator
topologies that are explored in this paper. Denote by
$\ogr{\hh}$ the $C^*$-algebra of all bounded linear
operators on a complex Hilbert space $\hh$. The {\em
weak}, the {\em strong} and the {\em $^*$-strong}
topologies are defined as the locally convex
topologies on $\ogr{\hh}$ induced by the following
families of seminorms
   \begin{align*}
\ogr{\hh} \ni T & \longmapsto |\is{Tf}{g}|, \quad
f,g\in \hh,
   \\
\ogr{\hh} \ni T & \longmapsto \|Tf\|, \quad f\in \hh,
   \\
\ogr{\hh} \ni T & \longmapsto \sqrt{\|Tf\|^2 +
\|T^*f\|^2}, \quad f\in \hh.
   \end{align*}
For more information on the relationship between the
different operator topologies, we refer the reader to
\cite[Chapter~II]{Tak02}. In this paper, we will
repeatedly use the following fact, called the
sequential joint strong continuity of multiplication
of operators (see \cite[Lemma~3]{St87}; see also
\cite[Lemma~1]{CoHad83}).
   \begin{lem} \label{jcty}
If $\{T_n\}_{n=1}^{\infty}\subseteq \ogr{\hh}$ and
$\{R_n\}_{n=1}^{\infty}\subseteq \ogr{\hh}$ are
sequences converging strongly to $T\in \ogr{\hh}$ and
$R\in \ogr{\hh}$, respectively, then
$\{T_nR_n\}_{n=1}^{\infty}$ converges strongly to
$TR$.
   \end{lem}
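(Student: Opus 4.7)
The plan is to reduce the strong convergence of the products $T_nR_n$ to the strong convergence of the two factors via a standard telescoping identity, supplemented by a uniform norm bound on the sequence $\{T_n\}$. Fix $f\in\hh$. The key algebraic step is to write
\[
T_nR_nf-TRf = T_n(R_nf-Rf)+(T_n-T)Rf,
\]
and estimate the two summands separately by the triangle inequality.

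First I would establish the uniform bound $M:=\sup_{n\Ge 1}\|T_n\|<\infty$. Since $\{T_nf\}_{n=1}^\infty$ converges for every $f\in\hh$, it is in particular bounded; the Banach-Steinhaus (uniform boundedness) principle then yields uniform boundedness in operator norm. Given this, the first summand is controlled by $\|T_n(R_nf-Rf)\|\Le M\|R_nf-Rf\|$, which tends to $0$ because $R_n\to R$ strongly. The second summand $\|(T_n-T)Rf\|$ tends to $0$ directly from $T_n\to T$ strongly applied to the fixed vector $Rf\in\hh$. Adding the bounds shows $T_nR_nf\to TRf$, and since $f$ was arbitrary, $T_nR_n\to TR$ strongly.

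I expect no real obstacle beyond spotting where the sequential hypothesis is used: the Banach-Steinhaus step is the reason the statement is restricted to sequences rather than extended verbatim to arbitrary nets, since a strongly convergent net of operators need not be norm-bounded. Once uniform boundedness is in hand, the argument is a one-line telescoping estimate.
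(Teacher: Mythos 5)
Your argument is correct and complete: the telescoping decomposition $T_nR_nf-TRf=T_n(R_nf-Rf)+(T_n-T)Rf$ together with the Banach--Steinhaus bound $\sup_n\|T_n\|<\infty$ is precisely the standard proof of this fact, and your remark on why sequences (rather than nets) are needed is on point. The paper itself gives no proof, citing \cite[Lemma~3]{St87} and \cite[Lemma~1]{CoHad83}, and your argument is essentially the one found there, so there is nothing to correct.
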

Recall that the {\em covariance} of an operator $T\in
\ogr{\hh}$, denoted by $\cov{T}$, is defined by
$\cov{T}=\sqrt{\|\triangle_{T}\|}$, where
$\triangle_{T}=T^*T-I$. It is easy to see that
   \begin{align} \label{tles}
\|T\| & \Le \sqrt{1+\cov{T}^2},
   \\  \label{tlex}
\cov{T} & \Le \sqrt{1+\|T\|^2}.
   \end{align}
The proof of the following observation is left to the
reader.
   \begin{lem} \label{cysoof}
If $T=\bigoplus_{\omega \in \varOmega} T_{\omega}$ is
the orthogonal sum of operators $T_{\omega}\in
\ogr{\hh_{\omega}}$, then $\cov{T}=\sup_{\omega \in
\varOmega}\cov{T_{\omega}}$.
   \end{lem}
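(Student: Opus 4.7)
The plan is to reduce the covariance identity to the standard fact that the operator norm of an orthogonal direct sum equals the supremum of the component norms. I would first observe that, relative to the decomposition $\hh = \bigoplus_{\omega \in \varOmega} \hh_{\omega}$, we have $T^*T = \bigoplus_{\omega} T_{\omega}^* T_{\omega}$ and $I = \bigoplus_{\omega} I_{\hh_{\omega}}$, so
\begin{align*}
\triangle_{T} = T^*T - I = \bigoplus_{\omega \in \varOmega} (T_{\omega}^*T_{\omega} - I_{\hh_{\omega}}) = \bigoplus_{\omega \in \varOmega} \triangle_{T_{\omega}}.
\end{align*}

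Next I would invoke the (well-known and elementary) identity $\|\bigoplus_{\omega} A_{\omega}\| = \sup_{\omega} \|A_{\omega}\|$ valid for any bounded orthogonal sum of operators; in our case this is applied to the self-adjoint family $\{\triangle_{T_{\omega}}\}_{\omega \in \varOmega}$. Note that boundedness of $T$ forces $\sup_{\omega} \|T_{\omega}\| < \infty$, which by \eqref{tlex} gives $\sup_{\omega} \cov{T_{\omega}} < \infty$, so the supremum on the right-hand side is finite. Combining with the previous display yields
\begin{align*}
\cov{T}^2 = \|\triangle_{T}\| = \sup_{\omega \in \varOmega} \|\triangle_{T_{\omega}}\| = \sup_{\omega \in \varOmega} \cov{T_{\omega}}^2 = \Big(\sup_{\omega \in \varOmega} \cov{T_{\omega}}\Big)^{\!2}.
\end{align*}
Taking positive square roots gives the claim.

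There is no serious obstacle here; the only thing to verify carefully is the orthogonal-sum norm identity, which for self-adjoint bounded $A_{\omega}$ follows immediately from $\|A\| = \sup_{\|x\|=1} |\is{Ax}{x}|$ applied to vectors supported in a single summand, together with the Pythagorean decomposition of a general unit vector. This is presumably why the authors leave the proof to the reader.
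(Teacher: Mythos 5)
Your proposal is correct and is precisely the argument the paper intends when it leaves the proof to the reader: $\triangle_T=\bigoplus_{\omega}\triangle_{T_\omega}$ together with the identity $\|\bigoplus_{\omega}A_{\omega}\|=\sup_{\omega}\|A_{\omega}\|$ and taking square roots. No issues; the finiteness remark via \eqref{tlex} is not even needed, since $\sup_{\omega}\|\triangle_{T_\omega}\|\Le\|\triangle_T\|<\infty$ automatically.
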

As shown below, the covariance map $\cov{\cdot}$ is
strongly lower semicontinuous. However, it is not
strongly continuous; worse, it is not $^*$-strongly
continuous (see Examples~\ref{prz1} and \ref{przew2}).
   \begin{lem} \label{cuw}
The map $\ogr{\hh}\ni T \mapsto \cov{T}\in [0,\infty)$
is strongly lower semicontinuous and norm continuous.
Moreover, if $\{T_{\tau}\}_{\tau\in \varSigma}
\subseteq \ogr{\hh}$ is a net converging strongly to
$T\in \ogr{\hh}$, then
   \begin{align*}
\cov{T} \Le \liminf_{\tau\in \varSigma}
\cov{T_{\tau}}.
   \end{align*}
   \end{lem}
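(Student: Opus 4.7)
The plan is to prove the ``moreover'' inequality first, as both strong lower semicontinuity and (modulo a small extra argument) norm continuity can then be read off easily.

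The key identity is that $\triangle_T = T^*T - I$ is self-adjoint, and hence
\begin{align*}
\cov{T}^2 = \|\triangle_T\| = \sup_{\|f\|\le 1} |\is{\triangle_T f}{f}| = \sup_{\|f\|\le 1} \big|\|Tf\|^2 - \|f\|^2\big|.
\end{align*}
With this formula in hand, the liminf inequality is almost immediate. Fix $f\in\hh$ with $\|f\|\le 1$. Since $T_\tau \to T$ strongly, $T_\tau f \to Tf$ in norm, so $\|T_\tau f\|^2 \to \|Tf\|^2$. Therefore
\begin{align*}
\big|\|Tf\|^2 - \|f\|^2\big| = \lim_{\tau\in\varSigma} \big|\|T_\tau f\|^2 - \|f\|^2\big| \Le \liminf_{\tau\in\varSigma} \|\triangle_{T_\tau}\| = \liminf_{\tau\in\varSigma} \cov{T_\tau}^2.
\end{align*}
Taking the supremum over $\|f\|\le 1$ on the left gives $\cov{T}^2 \Le \liminf_\tau \cov{T_\tau}^2$, and hence $\cov{T}\Le \liminf_\tau \cov{T_\tau}$. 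This is exactly the ``moreover'' statement, and it is equivalent to strong lower semicontinuity of $\cov{\cdot}$ viewed as a map into $[0,\infty)$ with its usual topology.

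For norm continuity, I would use the algebraic identity $T^*T - S^*S = T^*(T-S) + (T^*-S^*)S$, which yields $\|\triangle_T - \triangle_S\| \Le (\|T\|+\|S\|)\|T-S\|$, so $T\mapsto \triangle_T$ is norm continuous (in fact, locally Lipschitz). Composing with the continuous maps $A\mapsto\|A\|$ and $t\mapsto\sqrt{t}$ gives norm continuity of $\cov{\cdot}$.

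There is no real obstacle here; the only point requiring a moment of thought is the opening observation that the self-adjointness of $\triangle_T$ allows one to compute its norm via the numerical range, which then converts a statement about operator norms into a pointwise supremum to which strong convergence applies directly.
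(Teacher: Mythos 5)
Your proof is correct and rests on the same key identity as the paper's: since $\triangle_T$ is selfadjoint, its norm equals its numerical radius, so $\cov{T}^2=\sup_{\|f\|\Le 1}\big|\|Tf\|^2-\|f\|^2\big|$, which converts the operator-norm statement into a pointwise supremum to which strong convergence applies. The only difference is ordering and packaging: the paper first gets strong lower semicontinuity (a supremum of strongly continuous functions is lower semicontinuous) and then cites Pedersen for the liminf inequality, whereas you prove the liminf inequality directly and recover semicontinuity from the net characterization, and you also write out the norm-continuity estimate that the paper dismisses as obvious.
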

   \begin{proof}
Since the norm continuity of $\cov{\cdot}$ is obvious,
we can focus on proving the strong lower
semicontinuity. Denote by $\mathbb{S}$ the unit sphere
of $\hh$ centered at $0$. For a vector $h\in
\mathbb{S}$, define the function $\varphi_h\colon
\ogr{\hh} \to [0,\infty)$ by
   \begin{align*}
\varphi_h(T)= \big|\|Th\|^2-\|h\|^2\big|, \quad T\in
\ogr{\hh}.
   \end{align*}
Clearly, each $\varphi_h$ being strongly continuous is
strongly lower semicontinuous. This implies that
$\varphi = \sup_{h\in \mathbb{S}} \varphi_h$ is
strongly lower semicontinuous. However,
   \begin{align*}
\varphi(T) = \sup_{h\in \mathbb{S}} |\is{\triangle_T
h}h| = r_{\mathrm{n}}(\triangle_T), \quad T\in
\ogr{\hh},
   \end{align*}
where $r_{\mathrm{n}}(S)$ stands for the numerical
radius of an operator $S$. Since the numerical radius
and the norm of a selfadjoint operator coincide (see
\cite[Theorem~1.4-2]{Gus-Rao97}), we conclude that
$\varphi(T)=\cov{T}$. Hence, the covariance map is
strongly lower semicontinuous.

The ``moreover'' part is a direct consequence of
\cite[Proposition~1.5.11]{Ped89}.
   \end{proof}
The covariance map $\cov{\cdot}$ is monotonically
increasing with respect to the inclusion of operators.
   \begin{lem} \label{czw}
If $\hh$ is a closed space invariant for
$R\in\ogr{\kk}$, then
   \begin{align*}
\cov{R|_{\hh}}\Le \cov{R}.
   \end{align*}
   \end{lem}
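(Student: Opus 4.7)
The plan is to reduce the claim to a direct comparison of numerical radii, exploiting the fact that the defect operators $\triangle_T := T^*T - I$ and $\triangle_R := R^*R - I$ are selfadjoint, so their norms coincide with their numerical radii (as used in the proof of Lemma~\ref{cuw}).

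Set $T:=R|_{\hh}$. Invariance of $\hh$ under $R$ means $Rh \in \hh$ for every $h\in\hh$, and hence $Th=Rh$ for all $h\in\hh$. The key observation is that for every $h\in \hh$,
\begin{align*}
\is{\triangle_T h}{h} = \|Th\|^2 - \|h\|^2 = \|Rh\|^2 - \|h\|^2 = \is{\triangle_R h}{h}.
\end{align*}
Taking the supremum over the unit sphere of $\hh$, which is contained in the unit sphere of $\kk$, we obtain $r_{\mathrm{n}}(\triangle_T) \Le r_{\mathrm{n}}(\triangle_R)$.

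Since $\triangle_T \in \ogr{\hh}$ and $\triangle_R \in \ogr{\kk}$ are selfadjoint, invoking the identification of the numerical radius with the norm for selfadjoint operators (as in the proof of Lemma~\ref{cuw}) gives $\|\triangle_T\| \Le \|\triangle_R\|$. Taking square roots yields $\cov{R|_{\hh}}\Le \cov{R}$. No step presents a real obstacle; the only point worth stressing is that invariance (rather than reducibility) of $\hh$ is enough because the inequality compares quadratic forms on $\hh$, not the full operators.
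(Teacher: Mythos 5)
Your proof is correct and follows essentially the same route as the paper: compare the quadratic forms of $\triangle_{R|_{\hh}}$ and $\triangle_R$ on $\hh$, then pass to norms via the equality of numerical radius and norm for selfadjoint operators, exactly as in the proof of Lemma~\ref{cuw}. The only cosmetic difference is that the paper bounds $|\is{\triangle_R h}{h}|$ directly by $\cov{R}^2\|h\|^2$ rather than going through $r_{\mathrm{n}}(\triangle_R)$, which is an equivalent step.
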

   \begin{proof}
Set $T=R|_{\hh}$. Noting that for every $h\in \hh$,
   \begin{align*}
|\is{\triangle_{T}h}{h}| = \big|\|Th\|^2 -
\|h\|^2\big| = \big|\|Rh\|^2 - \|h\|^2\big| =
\big|\is{\triangle_{R}h}{h}\big| \Le \cov{R}^2
\|h\|^2,
   \end{align*}
we deduce that (cf. the proof of Lemma~\ref{cuw})
   \begin{align*}
\cov{T}^2=\|\triangle_{T}\| =
r_{\mathrm{n}}(\triangle_{T}) \Le \cov{R}^2,
   \end{align*}
which completes the proof.
   \end{proof}
How the covariance map behaves when restricting
$2$-isometries to invariant or reducing subspaces can
be found in \cite[Lemma~5.32, Corollary~5.57 and
Lemma~5.90]{Ag-St95-6}.
   \section{\label{Sec.3}Proofs of Proposition~\ref{orhsy} and
Theorems~\ref{stulyn} and \ref{clidr}} Brownian
unitaries can be characterized completely
algebraically as follows.
   \begin{thm}[\mbox{\cite[Theorem~5.20]{Ag-St95-6}}]
\label{chyrby} Let $T \in \ogr{\hh}$ and
$\sigma=\cov{T}$. Suppose that $\sigma >0$. Then $T$
is a Brownian unitary if and only if the following
hold{\em :}
   \begin{enumerate}
   \item[(i)] $T^{*2}T^2-2T^*T+I=0$,
   \item[(ii)] $\triangle_{T} (TT^*-I)\triangle_{T}=0$,
   \item[(iii)] $\sigma^{-2}\triangle_{T}$ is an orthogonal projection,
   \item[(iv)] there exists an orthogonal projection $Q \in
\ogr{\hh}$ such that
   \begin{align*}
(\sigma^2-\triangle_{T})
(TT^*-I)(\sigma^2-\triangle_{T})=\sigma^4(\sigma^2-1)Q.
   \end{align*}
   \end{enumerate}
   \end{thm}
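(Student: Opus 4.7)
The plan is to carry out both implications in the $2\times 2$ block decomposition of $\hh$ that condition (iii) is designed to produce. The forward implication is a routine calculation: fixing the decomposition $\hh=\hh_1\oplus\hh_2$ from the definition of a Brownian unitary and using $V^*V=E^*E=U^*U=I$, $V^*E=0$ (forced by $\ob{V}\perp\ob{E}$), and $VV^*+EE^*=I_{\hh_1}$, one computes
\begin{align*}
T^*T=\begin{pmatrix}I&0\\0&(1+\sigma^2)I\end{pmatrix}\quad\text{and}\quad T^{*2}T^2=I+2\triangle_{T},
\end{align*}
which immediately yield (iii) and (i). Direct block multiplication of $TT^*-I$ shows that its $(\hh_2,\hh_2)$ entry vanishes, giving (ii), and that $(\sigma^2-\triangle_{T})(TT^*-I)(\sigma^2-\triangle_{T})$ equals $\sigma^4(\sigma^2-1)$ times the projection onto $\ob{E}$, giving (iv).

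For the converse, (iii) supplies a projection $P$ with $\triangle_{T}=\sigma^2P$; I set $\hh_2=\ob{P}$, $\hh_1=\jd{P}$, so that $T^*T=I+\sigma^2 P$. Writing $T^{*2}T^2=T^*T+\sigma^2 T^*PT$ and combining with (i) gives
\begin{align*}
T^*PT=P,
\end{align*}
whence $\|PTh\|=\|Ph\|$ for all $h\in\hh$; taking $h\in\hh_1$ forces $PTh=0$, so $\hh_1$ is $T$-invariant and $T$ has the upper-triangular form $T=\bigl[\begin{smallmatrix}V&B\\0&D\end{smallmatrix}\bigr]$. Expanding $T^*T=I+\sigma^2 P$ in blocks yields $V^*V=I$, $V^*B=0$, and $B^*B+D^*D=(1+\sigma^2)I_{\hh_2}$, while (ii) reduces to $DD^*=I_{\hh_2}$ (so $D$ is a coisometry) and (iv) becomes $VV^*+BB^*-I=(\sigma^2-1)Q_1$ for the restriction $Q_1$ to $\hh_1$ of the projection $Q$ from (iv), satisfying $Q_1\Le I_{\hh_1}-VV^*$.

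The main obstacle is to upgrade $D$ to a unitary and to identify $B$ as $\sigma E$ for an isometry $E$ with $\hh_1=\ob{V}\oplus\ob{E}$. For this I would invoke the fact that $B^*B$ and $BB^*$ have identical nonzero spectra with matching multiplicities. Since $D^*D$ is a projection (as $D$ is a partial isometry), $B^*B=(1+\sigma^2)I-D^*D$ has spectrum contained in $\{\sigma^2,\,1+\sigma^2\}$, while the explicit formula for $BB^*$ places its spectrum in $\{0,\,1,\,\sigma^2\}$. As $1+\sigma^2\notin\{0,1,\sigma^2\}$ for $\sigma>0$, the spectral projection of $B^*B$ at $1+\sigma^2$ must vanish, forcing $D^*D=I$ and hence $B^*B=\sigma^2 I_{\hh_2}$. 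A parallel argument at the eigenvalue $1$ (when $\sigma\ne 1$), or direct inspection of (iv) (when $\sigma=1$, in which case the right-hand side of (iv) vanishes identically), yields $BB^*=\sigma^2(I-VV^*)$, so $E:=\sigma^{-1}B$ is an isometry whose range is the orthogonal complement of $\ob{V}$ in $\hh_1$. The decomposition is nontrivial because $\sigma>0$ makes $\hh_2\ne 0$ and precludes $\hh_1=0$ (which would force $T$ to be unitary and hence $\sigma=0$), recovering the defining block form of a Brownian unitary of covariance $\sigma$.
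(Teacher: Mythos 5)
Your argument is essentially correct, but note that the paper itself gives no proof of this statement: Theorem~\ref{chyrby} is quoted verbatim from Agler--Stankus \cite[Theorem~5.20]{Ag-St95-6}, so the only comparison available is with that external source, not with anything internal to this article. As a self-contained derivation your route is sound. The forward direction is indeed a routine block computation ($T^*T=I\oplus(1+\sigma^2)I$, $T^{*2}T^2=I+2\triangle_T$, the $(\hh_2,\hh_2)$ entry of $TT^*-I$ vanishing, and $(\sigma^2-\triangle_T)(TT^*-I)(\sigma^2-\triangle_T)=\sigma^4(\sigma^2-1)EE^*$). In the converse, the chain (iii) $\Rightarrow$ $\triangle_T=\sigma^2P$, (i) $\Rightarrow$ $T^*PT=P$ $\Rightarrow$ invariance of $\hh_1=\jd{P}$, (ii) $\Rightarrow$ $DD^*=I_{\hh_2}$, and (iv) $\Rightarrow$ $BB^*=P'+(\sigma^2-1)Q_1$ with $Q_1\Le P':=I_{\hh_1}-VV^*$ (here $Q_1\Le P'$ does follow, since $V^*B=0$ forces $BB^*-P'$ to be supported on $\ob{P'}$) all check out, and the key step --- comparing the spectra of $B^*B\subseteq\{\sigma^2,1+\sigma^2\}$ and $BB^*\subseteq\{0,1,\sigma^2\}$ to kill the eigenvalue $1+\sigma^2$ (hence $D^*D=I$, $B^*B=\sigma^2I$) and then the eigenvalue $1$ when $\sigma\neq1$ (hence $BB^*=\sigma^2P'$), with the case $\sigma=1$ read off directly from (iv) --- is a clean way to upgrade $D$ to a unitary and identify $E=\sigma^{-1}B$. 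The only imprecision is the parenthetical justification of nontriviality: $\hh_1=0$ does not first ``force $T$ to be unitary''; rather it is directly incompatible with what you have already derived, since then $B=0$ while $B^*B=(1+\sigma^2)I-D^*D\Ge\sigma^2I>0$ on $\hh_2=\ob{P}\neq\{0\}$ (equivalently, $DD^*=I$ gives $\|D\|=1$ while $D^*D=(1+\sigma^2)I$ gives $\|D\|^2=1+\sigma^2$). This is a one-line repair, not a gap.
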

Below we will show that the class of all
$2$-isometries is sequentially strongly closed, but
not sequentially weakly closed if the underlying
Hilbert space is infinite-dimensional. For this we
need the following definition. An operator $T\in
\ogr{\hh}$ is said to be {\em weakly}, {\em strongly}
or {\em uniformly stable} if the power sequence
$\{T^n\}_{n=1}^{\infty}$ converges to $0$ weakly,
strongly or in norm, respectively. We refer the reader
to \cite{Eis10,Kub97} for more information on this
topic.
   \begin{proof}[Proof of Theorem~\ref{clidr}] That
the class of $2$-isometries on $\hh$ is sequentially
strongly closed follows easily from the definition and
Lemma~\ref{jcty}. It remains to prove that this class
is not sequentially weakly closed if $\dim \hh\Ge
\aleph_0$. Let $\hh$ be any infinite-dimensional
Hilbert space. Decompose it as $\hh=\hh_1\oplus\hh_2$,
where $\hh_1$ and $\hh_2$ are Hilbert spaces of the
same dimension. Fix a real number $q\in (0,1)$. Let
$V\in\ogr{\hh_1}$ be an isometry with $\dim \hh_1
\ominus \ob{V}=\dim \hh_1$ whose unitary part is
weakly stable (the case in which the unitary part is
absent is not excluded), let $E_0\in
\ogr{\hh_2,\hh_1}$ be an isometry with range
orthogonal to the range of $V$ (this is always
possible due to the assumption on dimensions of
$\hh_1$ and $\hh_2$), and let $X_0\in \ogr{\hh_2}$ be
an isometry. Set $T = \big[\begin{smallmatrix} V & E \\
0 & X \end{smallmatrix}\big] \in \ogr{\hh_1 \oplus
\hh_2}$ with $E = \sqrt{1-q}E_0$ and $X=\sqrt{q}X_0$.
Clearly, $X$ commutes with $E^*E$ and $X$ is
quasinormal, that is, $X$ commutes with $X^*X$. Since
$E^*E+X^*X-I=0$, we infer from
\cite[Theorem~7.1(iii)]{C-J-J-S21} that $T$ is a
$2$-isometry. Because powers of a $2$-isometry are
$2$-isometries (see \cite[Theorem~2.3]{Jab02}), we see
that $\{T^n\}_{n=1}^{\infty}$ is a sequence of
$2$-isometries. We claim that $\{T^n\}_{n=1}^{\infty}$
converges weakly to the zero operator. Indeed, it
follows from \cite[Lemma~3.3]{C-J-J-S23-S} that $T^n =
\big[\begin{smallmatrix} V^n & E_n \\
0 & X^n \end{smallmatrix}\big]$ for all integers $n\Ge
1$, where
   \begin{align} \label{fyrmen}
E_n = \sum_{j=0}^{n-1} V^jEX^{n-1-j}, \quad n\Ge 1,
   \end{align}
and
   \begin{align} \label{ynyn}
E_n^*E_n = E^*E \bigg(\sum_{j=0}^{n-1}X^{*j}X^j\bigg),
\quad n\Ge 1.
   \end{align}
This implies that
   \begin{align*}
\|E_n\|^2 = \|E_n^*E_n\| = (1-q) \sum_{j=0}^{n-1} q^j
= 1-q^n, \quad n\Ge 1.
   \end{align*}
Since $\{q^n\}_{n=1}^{\infty}$ converges to $0$, we
conclude that $\sup_{n\Ge 1} \|E_n\| < \infty$.
Because the unitary part of $V$ is weakly stable and,
by $\|X\|<1$, $X$ is uniformly stable, we deduce from
\cite[Corollary~2.2]{C-J-J-S23-S} that the sequence
$\{T^n\}_{n=1}^{\infty}$ of $2$-isometries converges
weakly to the zero operator which is obviously not a
$2$-isometry. This completes the proof.
   \end{proof}
   \begin{rem}
Regarding the proof of the second conclusion of
Theorem~\ref{clidr}, it is worth mentioning that a
Brownian unitary of positive covariance is never
weakly stable. Indeed, if $T= \big[\begin{smallmatrix}
V & \sigma E_0 \\ 0 & U
\end{smallmatrix}\big] \in \ogr{\hh_1 \oplus \hh_2}$
is a Brownian unitary of covariance $\sigma \in
(0,\infty)$, then $T^n =
\big[\begin{smallmatrix} V^n & E_n \\
0 & U^n \end{smallmatrix}\big]$ for all integers $n\Ge
1$, where $E_n$ is as in \eqref{fyrmen} with $E=\sigma
E_0$ and $X=U$. Hence, by \eqref{ynyn}, we have
   \begin{align*}
\|E_n\| = \sigma \sqrt{n}, \quad n \Ge 1,
   \end{align*}
which means that the sequence $\{E_n\}_{n=1}^{\infty}$
is not norm bounded. Therefore, by
\cite[Corollary~2.2]{C-J-J-S23-S}, $T$ is not weakly
stable. However, if $T$ is a Brownian unitary of
covariance $\sigma=0$ on an infinite-dimensional
Hilbert space, then it may happen that $T$ is weakly
stable (see \cite[Example~6.1]{C-J-J-S23-S}). In turn,
any $2$-isometry on a finite-dimensional Hilbert space
$\hh$, being automatically unitary, is never stable
(regardless of which Hausdorff vector space topology
is taken on $\ogr{\hh}$), and the class of
$2$-isometries on such $\hh$ is closed.
   \hfill $\diamondsuit$
   \end{rem}
   \begin{proof}[Proof of Theorem~\ref{stulyn}]
We begin by proving the first statement. There is no
loss of generality in assuming that
   \begin{align} \label{tiono}
\triangle_{T} \neq 0.
   \end{align}
By Theorem~\ref{chyrby}, we see that for all $n\Ge 1$,
   \begin{gather*}
T_n^{*2}T_n^{2} - 2T_n^*T_n + I = 0,
   \\
\triangle_{T_n}(T_nT_n^* - I)\triangle_{T_n} = 0.
   \end{gather*}
Using Lemma~\ref{jcty} and passing to the limit as
$n\to\infty$, we get
   \begin{gather*}
T^{*2}T^{2} - 2T^*T + I = 0,
   \\
\triangle_{T}(TT^* - I)\triangle_{T} = 0,
   \end{gather*}
which means that $T$ satisfies the conditions (i) and
(ii) of Theorem~\ref{chyrby}.

Set $\sigma_n=\cov{T_n}$ for $n\Ge 1$. If the set
$\{n\Ge 1\colon \sigma_n >0\}$ is finite, then passing
to a subsequence if necessary we can assume that each
$T_n$ is unitary. Clearly, the $^*$-strong limit of
unitaries is unitary, so $T$ is unitary, a
contradiction to \eqref{tiono}. Otherwise $\{n\Ge
1\colon \sigma_n
>0\}$ is infinite, so by passing to a subsequence if
necessary, we may assume that $\sigma_n > 0$ for all
$n\Ge 1$. It follows from the uniform boundedness
principle and \eqref{tlex} that the sequence
$\{\sigma_n\}_{n=1}^{\infty}$ is bounded. Using the
Heine-Borel theorem and passing to a subsequence if
necessary, we may assume that
$\{\sigma_n\}_{n=1}^{\infty}$ converges in
$[0,\infty)$, say to $\sigma$. Note that by
Theorem~\ref{chyrby}(iii), $\triangle_{T_n}^2 =
\sigma_n^2 \triangle_{T_n}$ for all $n\Ge 1$. Using
Lemma~\ref{jcty} and passing to the limit as
$n\to\infty$, it is seen that
   \begin{align} \label{tynsum}
\triangle_{T}^2 = \sigma^2 \triangle_{T}.
   \end{align}

Now, we consider two cases.

{\sc Case 1.} $\sigma > 0$.

Then, in view of \eqref{tynsum},
$P:=\sigma^{-2}\triangle_{T}$ is an orthogonal
projection. By \eqref{tiono}, we have $\|P\|=1$, or
equivalently $\cov{T}=\sigma$. This means that $T$
satisfies the condition (iii) of Theorem~\ref{chyrby}.

We claim that $T$ satisfies the condition (iv) of
Theorem~\ref{chyrby}. Set
   \begin{align*}
\nabla&=(\sigma^2-\triangle_{T})
(TT^*-I)(\sigma^2-\triangle_{T}),
   \\
\nabla_n&=(\sigma_n^2-\triangle_{T_n})
(T_nT_n^*-I)(\sigma_n^2-\triangle_{T_n}), \quad n\Ge
1.
   \end{align*}
Consider first the case where $\sigma \neq 1$. Passing
to a subsequence if necessary, we can assume that
$\sigma_n \neq 1$ for all $n\Ge 1$. It follows from
Theorem~\ref{chyrby}(iv) that
   \begin{align} \label{pryk}
\nabla_n^2= \sigma_n^4(\sigma_n^2-1) \nabla_n, \quad
n\Ge 1.
   \end{align}
Using Lemma~\ref{jcty} and passing to the limit as
$n\to \infty$, we see that $\nabla^2=
\sigma^4(\sigma^2-1)\nabla$, or equivalently that
$\nabla= \sigma^4(\sigma^2-1)Q$, where $Q\in
\ogr{\hh}$ is an orthogonal projection. Consider now
the case where $\sigma=1$. By
Theorem~\ref{chyrby}(iv), for every $n\Ge 1$, there
exists an orthogonal projection $Q_n\in \ogr{\hh}$
such that $\nabla_n = \sigma_n^4(\sigma_n^2-1)Q_n$.
Since $\lim_{n\to \infty}\|
\sigma_n^4(\sigma_n^2-1)Q_n\|=0$, we deduce from
Lemma~\ref{jcty} that $\nabla=0$, so $\nabla=
\sigma^4(\sigma^2-1)Q$ for any orthogonal projection
$Q\in \ogr{\hh}$. This proves our claim.

Summarizing the above and using Theorem~\ref{chyrby},
we see that $T$ is a Brownian unitary of covariance
$\sigma$.

{\sc Case 2.} $\sigma = 0$.

Then, by \eqref{tynsum}, $\triangle_{T}^2=0$. Since
$\triangle_{T}$ is selfadjoint, we conclude that
$\triangle_{T}=0$, which contradicts \eqref{tiono}.

Now we prove the ``moreover'' part. Set
$\sigma=\cov{T_1}$. It is easy to see that there is no
loss of generality in assuming that $\sigma > 0$.
Arguing as above and using the fact that
$\cov{T_n}=\sigma$ for every $n\Ge 1$, we deduce that
$P:=\sigma^{-2}\triangle_T$ is an orthogonal
projection. If $P\neq 0$, then, arguing again as
above, we see that $T$ is a Brownian unitary of
covariance $\sigma$. Suppose now that $P=0$. Then $T$
is an isometry. We claim that $T$ is unitary. As
above, passing to the limit in \eqref{pryk} as
$n\to\infty$, we deduce that $\nabla=
\sigma^4(\sigma^2-1)Q$, where $Q\in \ogr{\hh}$ is an
orthogonal projection. Hence, since $\triangle_T=0$,
we have
   \begin{align} \label{firpo}
P_{\jd{T^*}} = (1-\sigma^2)Q,
   \end{align}
where $P_{\jd{T^*}}$ stands for the orthogonal
projection of $\hh$ onto $\jd{T^*}$. Suppose contrary
to our claim that $T$ is not unitary, or equivalently
that $P_{\jd{T^*}}\neq 0$. Then by \eqref{firpo},
$1-\sigma^2=1$, and thus $\sigma=0$, a contradiction.
This completes the proof.
   \end{proof}
The following is a direct consequence of
Theorem~\ref{stulyn} and the fact that the class of
isometries (resp., unitaries) on a Hilbert space is
strongly (resp., $^*$-strongly) closed.
   \begin{cor} \label{unyoub}
The union of the classes of Brownian unitaries and
non-unitary isometries on $\hh$ is sequentially
$^*$-strongly closed. Moreover, for every $\sigma \in
[0,\infty)$, the union of the classes of Brownian
unitaries of covariance $\sigma$ and unitary operators
on $\hh$ is sequentially $^*$-strongly closed.
   \end{cor}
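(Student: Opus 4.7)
The argument will be a short dichotomy based on Theorem~\ref{stulyn} together with two elementary closure facts. First, the class of isometries on $\hh$ is strongly closed: if $T_n\to T$ strongly and each $T_n$ satisfies $\|T_n x\|=\|x\|$ for all $x\in \hh$, then $\|T x\|=\|x\|$ by continuity of the norm along strongly convergent sequences. Second, the class of unitaries on $\hh$ is $^*$-strongly closed: applying the previous fact to $\{T_n\}$ and to $\{T_n^*\}$ shows that both $T$ and $T^*$ are isometries, hence $T$ is unitary. Both facts are classical, but I would mention them explicitly since they carry the two cases in which Theorem~\ref{stulyn} does not directly apply.

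To prove the first statement, let $\{T_n\}_{n=1}^{\infty}\subseteq \ogr{\hh}$ consist of Brownian unitaries and non-unitary isometries and converge $^*$-strongly to $T\in \ogr{\hh}$. By the pigeonhole principle, after passing to a subsequence (which still converges $^*$-strongly to $T$), I may assume either that every $T_n$ is a Brownian unitary, or that every $T_n$ is a non-unitary isometry. In the first case, Theorem~\ref{stulyn} gives that $T$ is an isometry or a Brownian unitary. In the second case, since $^*$-strong convergence implies strong convergence, the strong closedness of the isometries gives that $T$ is an isometry. Recalling that a unitary is by definition a Brownian unitary of covariance $0$, every isometry lies in the union of Brownian unitaries and non-unitary isometries, and so does $T$.

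For the second statement, fix $\sigma\in [0,\infty)$ and let $\{T_n\}$ be a sequence of operators, each either a Brownian unitary of covariance $\sigma$ or a unitary, $^*$-strongly convergent to $T$. When $\sigma=0$ every $T_n$ is unitary, and the $^*$-strong closedness of the unitaries finishes the argument. When $\sigma>0$ I apply the same pigeonhole dichotomy: if every $T_n$ has covariance $\sigma$, the ``moreover'' clause of Theorem~\ref{stulyn} yields that $T$ is unitary or a Brownian unitary with $\cov{T}=\sigma$; if every $T_n$ is unitary, the $^*$-strong closedness of unitaries yields that $T$ is unitary. Either way, $T$ sits in the required union.

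I do not expect any real obstacle: the proof is essentially a bookkeeping exercise arranged around Theorem~\ref{stulyn}. The only conceptual point worth highlighting is the convention identifying unitaries with Brownian unitaries of covariance $0$, which is precisely what lets an isometric $^*$-strong limit that happens to be unitary naturally slot into the Brownian-unitary side of the union in the first statement, without forcing any extra case analysis.
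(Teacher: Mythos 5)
Your proof is correct and follows essentially the same route as the paper, which states the corollary as a direct consequence of Theorem~\ref{stulyn} together with the strong closedness of isometries and the $^*$-strong closedness of unitaries; your pigeonhole dichotomy and the convention that unitaries are Brownian unitaries of covariance $0$ simply make that intended argument explicit.
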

   \begin{rem} \label{nuprth}
It is worth noting that Theorem~\ref{stulyn} and
Corollary~\ref{unyoub} are no longer valid if the
$^*$-strong topology is replaced by the strong
topology. This can be seen by applying
Theorem~\ref{stlymbu} to any non-isometric
$2$-isometry $T$ on a separable Hilbert space $\hh$,
which is not a Brownian unitary. To have such a $T$,
consider an operator of
the form $T = \big[\begin{smallmatrix} V & \sigma E \\
0 & X \end{smallmatrix}\big]$, where $V$ and $E$ are
isometries such that $\ob{V}\perp \ob{E}$, $X$ is a
non-unitary isometry and $\sigma \in (0,\infty)$. It
is easy to see that $T$ is a non-isometric
$2$-isometry (see, e.g.,
\cite[Theorem~7.1]{C-J-J-S21}). That $T$ is not a
Brownian unitary follows from Theorem~\ref{chyrby} by
falsifying the condition (ii).
   \hfill $\diamondsuit$
   \end{rem}
On the other hand, as shown below, it may be the case
that the $^*$-strong limit of a sequence of Brownian
unitaries, each with positive covariance, is a
non-unitary isometry.
   \begin{exa} \label{sslnv}
Let $\hh$ be an infinite-dimensional Hilbert space.
Decompose it as $\hh=\hh_1\oplus \hh_2$ with $0<\dim
\hh_2 \Le \dim \hh_1$. Take a unitary operator $U\in
\ogr{\hh_2}$ and two isometries $V\in \ogr{\hh_1}$ and
$E\in \ogr{\hh_2,\hh_1}$ such that $\ob{V}\oplus
\ob{E} = \hh_1$ (this is always possible). Let
$\{\sigma_n\}_{n=1}^{\infty}$ be a sequence of
positive real numbers that converges to $0$. For $n\Ge
1$, set $T_n =
\big[\begin{smallmatrix} V & \sigma_n E \\
0 & U \end{smallmatrix}\big] \in \ogr{\hh_1 \oplus
\hh_2}$. Then each $T_n$ is a Brownian unitary of
covariance $\sigma_n$, and $\{T_n\}_{n=1}^{\infty}$
converges in norm to $V\oplus U$. This implies that
$\{T_n\}_{n=1}^{\infty}$ converges $^*$-strongly to
$V\oplus U$. Clearly, $V\oplus U$ being a non-unitary
isometry is not a Brownian unitary.
   \hfill $\diamondsuit$
   \end{exa}
As mentioned earlier, the question of whether the
class of Brownian unitaries is closed under orthogonal
sums has a negative answer (just consider the
orthogonal sum of Brownian unitaries of different
positive covariances; see also
Proposition~\ref{apyyqe}).
   \begin{proof}[Proof of Proposition~\ref{orhsy}]
Set $\sigma=\cov{T}$ and
$\sigma_{\omega}=\cov{T_{\omega}}$ for $\omega \in
\varOmega$. By Lemma~\ref{cysoof},
$\sigma=\sup_{\omega \in \varOmega} \sigma_{\omega}$.
Without loss of generality we can assume that $\sigma
>0$.

Consider first the case where $\sigma_{\omega} > 0$
for every $\sigma\in \varOmega$. Since
   \begin{align*}
\frac{\triangle_{T}}{\sigma^2} = \bigoplus_{\omega\in
\varOmega} \frac{\triangle_{T_\omega}}{\sigma^2},
   \end{align*}
we see that $\frac{\triangle_{T}}{\sigma^2}$ is an
orthogonal projection if and only if
$\frac{\triangle_{T_\omega}}{\sigma^2}$ is an
orthogonal projection for every $\omega \in
\varOmega$. Since $\triangle_{T_\omega}\neq 0$ for
every $\omega \in \varOmega$, each of the above two
equivalent conditions implies that
$\big\|\frac{\triangle_{T_\omega}}{\sigma^2}\big\|=1$,
or equivalently that $\sigma_\omega=\sigma$ for every
$\omega \in \varOmega$. This together with
Theorem~\ref{chyrby} applied to $T$ and $T_{\omega}$
implies that $T$ is a Brownian unitary if and only if
$T_{\omega}$ is a Brownian unitary and
$\sigma_{\omega}=\sigma$ for every $\omega\in
\varOmega$.

Suppose now that $\varOmega_1:=\{\omega\in
\varOmega\colon \sigma_{\omega} > 0\}$ is a proper
subset of $\varOmega$. Then the operator $T$
decomposes as $T= A \oplus B$, where
$A=\bigoplus_{\omega \in \varOmega_1} T_{\omega}$ and
$B=\bigoplus_{\omega \in \varOmega\setminus
\varOmega_1} T_{\omega}$. Clearly, by
Lemma~\ref{cysoof}, $\cov{A}= \sup_{\omega \in
\varOmega_1} \sigma_{\omega} = \sigma > 0$ and
$\cov{B}=0$. If $T$ is a Brownian unitary, then by
\cite[Lemma~5.32]{Ag-St95-6}, $A$ is a Brownian
unitary of covariance $\sigma > 0$ and $B$ is unitary.
Using what was done in the previous paragraph, we see
that $T_{\omega}$ is a Brownian unitary of covariance
$\sigma$ for every $\omega \in \varOmega_1$. Clearly,
$T_{\omega}$ is a Brownian unitary of covariance $0$
for every $\omega \in \varOmega \setminus
\varOmega_1$. In turn, if $T_{\omega}$ is a Brownian
unitary and $\sigma_{\omega} \in \{\sigma,0\}$ for
every $\omega \in \varOmega$, then, from what was done
in the previous paragraph, we deduce that $A$ is a
Brownian unitary of covariance $\sigma$. Moreover, $B$
is a Brownian unitary of covariance $0$. It is a
matter of routine to verify, by using
Theorem~\ref{chyrby}, that $T=A\oplus B$ is a Brownian
unitary. This completes the proof.
   \end{proof}
   \begin{cor} \label{wigrn}
Let $T=A \oplus B$ be the orthogonal sum of two
operators $A\in\ogr{\hh}$ and $B\in\ogr{\kk}$ for
which $\cov{A} > 0$ and $\cov{B}=0$. Then $T$ is a
Brownian unitary if and only if $A$ and $B$ are
Brownian unitaries.
   \end{cor}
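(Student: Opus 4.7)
The plan is to derive Corollary~\ref{wigrn} as a direct specialization of Proposition~\ref{orhsy} to the two-element index set $\varOmega=\{1,2\}$ with $T_1=A$ and $T_2=B$. First, using Lemma~\ref{cysoof}, I would compute
\begin{align*}
\cov{T}=\sup\{\cov{A},\cov{B}\}=\cov{A}>0,
\end{align*}
so in particular $\cov{T}$ is nonzero, and the values $\cov{A}$ and $\cov{B}$ lie in $\{0,\cov{T}\}$ (with $\cov{A}=\cov{T}$ and $\cov{B}=0$).

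For the forward implication, suppose $T$ is a Brownian unitary. Proposition~\ref{orhsy} then yields at once that each of $A$ and $B$ is a Brownian unitary (the accompanying constraint that $\cov{T_\omega}\in\{0,\cov{T}\}$ is automatically fulfilled by the computation above, so it imposes no further condition here).

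For the reverse implication, suppose $A$ and $B$ are Brownian unitaries. Then $\cov{A}=\cov{T}$ and $\cov{B}=0$, so both covariances fall into $\{0,\cov{T}\}$, which is exactly the hypothesis of the other direction of Proposition~\ref{orhsy}. Applying that proposition produces that $T=A\oplus B$ is a Brownian unitary, completing the proof.

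There is no genuine obstacle: the statement is essentially a reading of Proposition~\ref{orhsy} in the two-summand case, with the hypotheses $\cov{A}>0$ and $\cov{B}=0$ arranged so that Proposition~\ref{orhsy}'s side condition on covariances is trivially met. The only minor care needed is to observe, via Lemma~\ref{cysoof}, that $\cov{T}$ coincides with $\cov{A}$, so the pair $(\cov{A},\cov{B})=(\cov{T},0)$ indeed has both entries in $\{0,\cov{T}\}$.
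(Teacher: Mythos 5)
Your proposal is correct and matches the paper's (implicit) treatment: the corollary is stated there without proof precisely because it is the two-summand specialization of Proposition~\ref{orhsy}, with Lemma~\ref{cysoof} giving $\cov{T}=\cov{A}$ so that the covariance side condition is automatic. Nothing further is needed.
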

   \begin{cor} \label{resyte}
For $\sigma \in [0,\infty)$, an operator $T\in
\ogr{\hh}$ is a Brownian unitary of covariance $0$ or
$\sigma$ if and only if $T$ is the restriction of a
Brownian unitary of covariance $\sigma$ to a reducing
subspace.
   \end{cor}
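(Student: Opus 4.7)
The plan is to derive Corollary~\ref{resyte} directly from Proposition~\ref{orhsy} (or its two-summand instance Corollary~\ref{wigrn}), using the basic observation that a reducing subspace $\hh$ of $R\in\ogr{\kk}$ yields the orthogonal decomposition $R=R|_{\hh}\oplus R|_{\kk\ominus\hh}$.

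For the \emph{if} direction, I would assume $T=R|_{\hh}$ where $R\in\ogr{\kk}$ is a Brownian unitary of covariance $\sigma$ and $\hh\subseteq\kk$ reduces $R$. Writing $R$ as the above orthogonal sum and applying Proposition~\ref{orhsy} forces each summand, and in particular $T$, to be a Brownian unitary whose covariance lies in $\{0,\cov{R}\}=\{0,\sigma\}$, which is exactly the conclusion.

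For the \emph{only if} direction, suppose $T$ is a Brownian unitary of covariance $0$ or $\sigma$. If $\cov{T}=\sigma$, or if $\sigma=0$ (so $T$ is automatically unitary, i.e.\ a Brownian unitary of covariance $0$), then $T$ itself serves as the required $R$ with $\hh$ as its own reducing subspace. The only nontrivial subcase is $\cov{T}=0<\sigma$. Here I would fix any Brownian unitary $S$ of covariance $\sigma$ on an auxiliary Hilbert space $\kk_0$ — its existence follows at once from the matrix description of Brownian unitaries recalled in the introduction, on any sufficiently large infinite-dimensional space — and set $R:=T\oplus S\in\ogr{\hh\oplus\kk_0}$. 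Since $T$ is a Brownian unitary of covariance $0$ and $S$ is a Brownian unitary of covariance $\sigma>0$, Corollary~\ref{wigrn} ensures that $R$ is a Brownian unitary, and Lemma~\ref{cysoof} gives $\cov{R}=\sigma$. Clearly $\hh$ reduces $R$ and $R|_{\hh}=T$.

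I do not anticipate any real obstacle: every ingredient beyond Proposition~\ref{orhsy} — the existence of a Brownian unitary of prescribed positive covariance and the direct-sum decomposition along a reducing subspace — is purely formal, so the corollary essentially packages Proposition~\ref{orhsy} in a form tailored to the language of extensions.
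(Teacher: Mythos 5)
Your proposal is correct. The ``only if'' direction coincides with the paper's: in the only nontrivial case $\cov{T}=0<\sigma$, the paper likewise takes $T\oplus B$ with $B$ an arbitrary Brownian unitary of covariance $\sigma$ and invokes Corollary~\ref{wigrn} together with Lemma~\ref{cysoof}, exactly as you do. The only divergence is in the ``if'' direction: the paper cites \cite[Lemma~5.32]{Ag-St95-6} on restrictions of Brownian unitaries to reducing subspaces, whereas you decompose $R=T\oplus R|_{\kk\ominus\hh}$ and apply the necessity half of Proposition~\ref{orhsy} to read off that $T$ is a Brownian unitary with $\cov{T}\in\{0,\sigma\}$ (the degenerate case $\kk\ominus\hh=\{0\}$ being trivial). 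This is logically sound, avoids circularity (Proposition~\ref{orhsy} precedes the corollary), and makes the statement formally self-contained within the paper; it is not genuinely more elementary, though, since the paper's proof of Proposition~\ref{orhsy} itself leans on the same Agler--Stankus lemma, so in substance the two arguments carry the same content.
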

   \begin{proof}
For the ``only if'' part, note that a unitary operator
$T$ is the restriction of a Brownian unitary $T\oplus
B$ of covariance $\sigma$ (use Lemma~\ref{cysoof} and
Corollary~\ref{wigrn}) to a reducing subspace, where
$B$ is any Brownian unitary of covariance $\sigma$.
The ``if'' part is a direct consequence of
\cite[Lemma~5.32]{Ag-St95-6}.
   \end{proof}
Following \cite{CoHad83} (cf.\ \cite{Voi76,Had87}), we
say that a class (or rather a property) $\mathscr{E}$
of Hilbert space operators is {\em closed under
approximate equivalence} if for any operator $T\in
\ogr{\hh}$ of class $\mathscr{E}$ and any operator
$R\in \ogr{\kk}$ such that $\lim_{\tau\in \varSigma}
\|U_{\tau}^* T U_{\tau} - R\|=0$ for some net
$\{U_{\tau}\}_{\tau\in \varSigma}$ of unitary
operators $U_{\tau}\in \ogr{\kk,\hh}$, the operator
$R$ is of class $\mathscr{E}$. The class $\mathscr{E}$
is said to be {\em closed under unitary equivalence}
if for any operator $T\in \ogr{\hh}$ of class
$\mathscr{E}$ and any unitary operator $U\in
\ogr{\kk,\hh}$, the operator $U^*TU$ is of class
$\mathscr{E}$. Finally, $\mathscr{E}$ is called {\em
closed under orthogonal sums} if for any uniformly
bounded family $\{T_{\omega}\}_{\omega\in \varOmega}$
of operators of class $\mathscr{E}$, the orthogonal
sum $T=\bigoplus_{\omega \in \varOmega} T_{\omega}$ is
an operator of class $\mathscr{E}$. Clearly, the
closedness under approximate equivalence implies the
closedness under unitary equivalence.

As shown below, the class of Brownian unitaries of
fixed covariance is closed under each operation
mentioned above.
   \begin{pro}\label{apyyqe}
If $\sigma\in [0,\infty)$ is fixed, then the class of
Brownian unitaries of covariance $\sigma$ is closed
under unitary equivalence, approximate equivalence and
orthogonal sums.
   \end{pro}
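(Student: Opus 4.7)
The plan is to verify the three closure properties in sequence, exploiting the fact that unitary equivalence and orthogonal sums are essentially bookkeeping while approximate equivalence is the only nontrivial step.

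For unitary equivalence, I would argue directly from the block-matrix definition of a Brownian unitary: conjugating by a unitary $U\in\ogr{\kk,\hh}$ transports the orthogonal decomposition $\hh=\hh_1\oplus\hh_2$ and all the isometry/unitarity data to $\kk=U^*(\hh_1)\oplus U^*(\hh_2)$ without changing the covariance. (Equivalently, the four algebraic identities (i)--(iv) of Theorem~\ref{chyrby} are manifestly stable under unitary conjugation.) For orthogonal sums, the family $\{T_\omega\}_{\omega\in\varOmega}$ is automatically uniformly bounded by~\eqref{tles}; then Proposition~\ref{orhsy} yields that the sum is a Brownian unitary, and Lemma~\ref{cysoof} pins the covariance down to $\sigma$. (In the degenerate case $\sigma=0$ this just reads: the orthogonal sum of unitary operators is unitary.)

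The main obstacle is approximate equivalence, and I would dispose of it as follows. Assume $T\in\ogr{\hh}$ is a Brownian unitary of covariance $\sigma$ and $R\in\ogr{\kk}$ satisfies $\|U_\tau^*TU_\tau - R\|\to 0$ for some net of unitaries $U_\tau\in\ogr{\kk,\hh}$. By the unitary equivalence step already handled, each $S_\tau:=U_\tau^*TU_\tau$ is a Brownian unitary of covariance $\sigma$ in $\ogr{\kk}$. Since the net converges in norm, I can extract a sequence $\tau_n$ with $\|S_{\tau_n}-R\|<1/n$; this sequence converges to $R$ in norm, hence also $^*$-strongly. The ``moreover'' part of Theorem~\ref{stulyn}, applied with common covariance $\sigma$, then says that $R$ is either unitary or a Brownian unitary of covariance $\sigma$. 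For $\sigma>0$, the unitary alternative is excluded by the norm continuity of $\cov{\cdot}$ in Lemma~\ref{cuw}, which forces $\cov{R}=\sigma>0$; for $\sigma=0$, each $S_{\tau_n}$ is unitary and the norm limit of unitaries is unitary. In either case $R$ is a Brownian unitary of covariance $\sigma$, as required. The entire leverage is provided by Theorem~\ref{stulyn} together with the sequentializability of norm convergence, so once those are in hand the proposition should follow without further effort.
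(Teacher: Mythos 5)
Your proof is correct, and the unitary-equivalence and orthogonal-sum parts coincide in substance with the paper's (the paper obtains unitary equivalence as the constant-net case of approximate equivalence, whereas you prove it directly and then feed it into the approximate-equivalence step, which is legitimate since your direct argument is independent). The real divergence is in the approximate-equivalence step. The paper conjugates each of the algebraic identities (i)--(iv) of Theorem~\ref{chyrby} by the inner automorphism $X \mapsto U_{\tau}^{*}XU_{\tau}$, passes to the norm limit to see that $R$ satisfies (i)--(iv), identifies $\cov{R}=\sigma$ by norm continuity of the covariance, and applies Theorem~\ref{chyrby} in the converse direction; it never leaves the algebraic characterization. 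You instead extract a sequence from the norm-convergent net, observe that norm convergence implies $^*$-strong convergence, and invoke the ``moreover'' part of Theorem~\ref{stulyn} for a sequence of constant covariance, ruling out the unitary alternative when $\sigma>0$ via Lemma~\ref{cuw} and handling $\sigma=0$ by the norm-closedness of the unitaries. This is a valid shortcut: Theorem~\ref{stulyn} is proved earlier in Section~3 without reference to Proposition~\ref{apyyqe}, so there is no circularity, and your route buys brevity by reusing the heavier $^*$-strong closedness result. The paper's route is more self-contained (it only needs Theorem~\ref{chyrby} and the norm continuity of $\cov{\cdot}$) and avoids the sequential extraction and the case split on the dichotomy ``unitary or Brownian unitary,'' but both arguments are sound.
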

   \begin{proof}
First, we show that the class of Brownian unitaries of
covariance $\sigma$ is closed under approximate
equivalence. Let $T\in \ogr{\hh}$ be a Brownian
unitary of covariance $\sigma$ and $R\in \ogr{\kk}$ be
such that $\lim_{\tau\in \varSigma} \|U_{\tau}^* T
U_{\tau} - R\|=0$ for some net of unitary operators
$U_{\tau}\in \ogr{\kk,\hh}$. There is no loss of
generality in assuming that $\sigma>0$. Applying the
inner automorphism $\ogr{\hh}\ni X \mapsto
U_{\tau}^*XU_{\tau}\in \ogr{\kk}$ to both sides of
each equality appearing in the conditions (i)-(iv) of
Theorem~\ref{chyrby}, and then passing to the limit as
$\tau\to\infty$, we deduce that the limit operator $R$
satisfies the conditions (i)-(iv) with
$\sigma=\cov{T}$. Since
   \begin{align*}
\cov{R}=\lim_{\tau\in \varSigma}
\cov{U_{\tau}^*TU_{\tau}}=\sigma,
   \end{align*}
we can apply Theorem~\ref{chyrby} to deduce that $R$
is a Brownian unitary of covariance $\sigma$.

The above implies that the class of Brownian unitaries
of covariance $\sigma$ is closed under unitary
equivalence. That this class is closed under
orthogonal sums follows immediately from
Proposition~\ref{orhsy} and Lemma~\ref{cysoof}.
   \end{proof}
   \section{\label{Sec.4}Proofs of Theorems~\ref{liumyt},
\ref{stlymbu} and \ref{cymprs}} We begin with the case
of arbitrary (not necessarily separable) Hilbert
spaces. The following result is inspired by
\cite[Theorem~3.3]{Bis57} and \cite[Theorem]{CoHad83}.
   \begin{lem} \label{jesd}
Let $\mathscr{E}$ be a class of Hilbert space
operators closed under unitary equivalence. Suppose
that $\hh$ and $\kk$ are Hilbert spaces of the same
orthogonal dimension such that $\hh \subseteq \kk$.
Let $R \in \ogr{\kk}$ be an operator of class
$\mathscr{E}$ and $T\in \ogr{\hh}$ be an operator such
that $T\subseteq R$. Then there exists a net
$\{T_{\tau}\}_{\tau\in \varSigma}\subseteq \ogr{\hh}$
of operators of class $\mathscr{E}$ converging
strongly to $T$ such that
$\|p(T_{\tau},T_{\tau}^*)\|=\|p(R,R^*)\|$ for every
complex polynomial $p$ in two noncommuting variables.
In particular, $\|T_{\tau}\|=\|R\|$ and
$\cov{T_{\tau}}=\cov{R}$ for every $\tau\in
\varSigma$. Moreover, if $\hh$ is separable, then the
net $\{T_{\tau}\}_{\tau\in \varSigma}$ can be replaced
by a sequence $\{T_n\}_{n=1}^{\infty}$ with the same
properties.
   \end{lem}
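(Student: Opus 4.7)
\textbf{Proof proposal for Lemma~\ref{jesd}.} The plan is to adapt Bishop's classical idea: approximate $T$ by unitary rotations of $R$ that are arranged to coincide with $T$ on prescribed finite-dimensional subspaces of $\hh$. First, I dispose of the trivial case in which $\hh$ is finite-dimensional; then the dimension hypothesis together with $\hh \subseteq \kk$ forces $\hh = \kk$, so $T = R$ and the constant net $T_{\tau} = R$ works. Assume henceforth that $\hh$ is infinite-dimensional, and set $\kappa = \dim \hh = \dim \kk \Ge \aleph_0$.

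I take $\varSigma$ to be the family of finite subsets of $\hh$, directed by inclusion. Given $F \in \varSigma$, I enlarge it to the \emph{finite-dimensional} subspace $\hh_F := \mathrm{lin}\, (F \cup TF) \subseteq \hh$. Since $\hh_F$ is finite-dimensional and $\dim \hh = \dim \kk = \kappa \Ge \aleph_0$, the orthogonal complements $\hh \ominus \hh_F$ and $\kk \ominus \hh_F$ both have dimension $\kappa$, so there exists a unitary $V_F \colon \kk \ominus \hh_F \to \hh \ominus \hh_F$. Define $U_F \colon \kk \to \hh$ by $U_F|_{\hh_F} = I_{\hh_F}$ and $U_F|_{\kk \ominus \hh_F} = V_F$; this $U_F$ is unitary. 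Set $T_F := U_F R U_F^* \in \ogr{\hh}$. By closure of $\mathscr{E}$ under unitary equivalence, $T_F \in \mathscr{E}$. From the identity $p(T_F, T_F^*) = U_F \, p(R, R^*) \, U_F^*$ (valid for every polynomial in two noncommuting variables) and the fact that $U_F U_F^* = I_{\hh}$ and $U_F^* U_F = I_{\kk}$, I immediately get $\|p(T_F, T_F^*)\| = \|p(R, R^*)\|$, and in particular $\|T_F\| = \|R\|$ and $\cov{T_F} = \sqrt{\|U_F(R^*R - I_{\kk})U_F^*\|} = \cov{R}$.

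For the strong convergence, the crucial observation is that if $h \in F$ then $h \in \hh_F$, so $U_F^* h = h$; moreover $T \subseteq R$ gives $Rh = Th \in \hh_F$, hence
\begin{align*}
T_F h \;=\; U_F R U_F^* h \;=\; U_F R h \;=\; U_F T h \;=\; T h.
\end{align*}
Thus for every $h \in \hh$ and every $F \in \varSigma$ with $h \in F$ one has $T_F h = T h$, so $\{T_F\}_{F\in\varSigma}$ converges strongly to $T$. In the separable case, I fix a dense sequence $\{h_k\}_{k=1}^{\infty} \subseteq \hh$, put $F_n = \{h_1,\dots,h_n\}$, and set $T_n := T_{F_n}$; then $T_n h_k = T h_k$ whenever $n \Ge k$, which together with the uniform bound $\|T_n\| = \|R\|$ yields strong convergence $T_n \to T$ on all of $\hh$ by a standard $3\varepsilon$ argument.

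The step that requires the most care is the dimension bookkeeping for $\hh \ominus \hh_F$ and $\kk \ominus \hh_F$; it is precisely here that the hypothesis of equal orthogonal dimensions is used, and why the infinite-dimensional case has to be separated off from the trivial finite-dimensional one (where $\hh_F$ itself could exhaust $\hh$). Once the unitary $U_F$ is set up so as to fix $\hh_F$ pointwise, everything else — membership in $\mathscr{E}$, preservation of polynomial norms, and the exact equality $T_F h = T h$ on $F$ — follows essentially formally.
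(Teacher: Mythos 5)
Your proposal is correct and follows essentially the same route as the paper: index by finite-dimensional pieces of $\hh$, build unitaries $\kk\to\hh$ that fix a finite-dimensional subspace containing both $\tau$ and $T(\tau)$ (using the equal-dimension hypothesis for the orthogonal complements), conjugate $R$ by them, and read off membership in $\mathscr{E}$, the polynomial-norm equalities, and eventual pointwise agreement $T_F h = Th$; the separable case via a dense sequence and uniform boundedness matches the paper's orthonormal-basis argument. The only difference is that you spell out the construction of the unitaries and the convergence details that the paper leaves implicit.
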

   \begin{proof}
There is no loss of generality in assuming that
$\dim\hh=\dim \kk \Ge \aleph_0$. Let $\varSigma$ be
the set of all nonzero finite-dimensional subspaces of
$\hh$ partially ordered by inclusion. Then for every
$\tau \in \varSigma$, there exists a unitary
isomorphism $U_{\tau} \in \ogr{\hh,\kk}$ such that
$U_{\tau}h=h$ for every $h\in \tau + T(\tau)$. Set
$T_{\tau}=U_{\tau}^{-1}RU_{\tau}$ for $\tau\in
\varSigma$. By assumption, $T_{\tau}$ is of class
$\mathscr{E}$ for every $\tau \in \varSigma$, and
$\{T_{\tau}\}_{\tau\in \varSigma}$ converges strongly
to $T$. Since for every complex polynomial $p$ in two
noncommuting variables,
   \begin{align*}
p(T_{\tau},T_{\tau}^*) = U_{\tau}^{-1} p(R,R^*)
U_{\tau},
   \end{align*}
we see that $\|p(T_{\tau},T_{\tau}^*)\|=\|p(R,R^*)\|$.

Now, we discuss the ``moreover'' part. Assume that
$\dim \hh=\aleph_0$. Then, fixing an orthonormal basis
$\{e_j\}_{j=1}^{\infty}$ of $\hh$, one can define
$T_n=T_{\tau_n}$ with $\tau_n=\bigvee\{e_j\}_{j=1}^n$
for $n\Ge 1$.
   \end{proof}
   \begin{proof}[Proof of Theorem~\ref{liumyt}]
(i)$\Rightarrow$(ii) Suppose that $T$ is a
$2$-isometry. According to
\cite[Theorem~5.80]{Ag-St95-6} and its proof, $T$
extends to a Brownian unitary $R\in \ogr{\kk}$ such
that $\cov{R}=\cov{T}$ and $\dim \hh=\dim \kk$. By
Proposition~\ref{apyyqe}, the class $\mathscr{E}$ of
Brownian unitaries $S$ with $\cov{S}=\cov{T}$
satisfies the assumptions of Lemma~\ref{jesd}.
Applying this lemma yields (ii).

The implication (ii)$\Rightarrow$(iii) is trivial.

(iii)$\Rightarrow$(i) Let $T$ be the strong limit of a
net $\{T_{\tau}\}_{\tau \in \varSigma}$ of Brownian
unitaries such that $\sup_{\tau\in \varSigma}
\cov{T_{\tau}} < \infty$. It follows from \eqref{tles}
that $\sup_{\tau\in \varSigma}\|T_{\tau}\| < \infty$.
By \cite[Lemma~1]{CoHad83}, we have
   \begin{align} \label{piweors}
\text{$\textrm{s-}\lim\nolimits_{\tau\in \varSigma}
T_{\tau}^k=T^k$ for every integer $k\Ge 0$.}
   \end{align}
Since any Brownian unitary is a $2$-isometry (see
Theorem~\ref{chyrby}(i)), we see that
   \begin{align*}
\|h\|^2 - 2 \|T_{\tau}h\|^2 + \|T_{\tau}^2h\|^2 =0,
\quad h\in \hh, \, \tau \in \varSigma.
   \end{align*}
Letting $\tau\to \infty$ and using \eqref{piweors}, we
conclude that
   \begin{align*}
\|h\|^2 - 2 \|Th\|^2 + \|T^2h\|^2 =0, \quad h\in \hh.
   \end{align*}
This means that $T$ is a $2$-isometry.

Now we prove the ``moreover'' part. The inclusion
``$\subseteq$'' is a direct consequence of the
implication (i)$\Rightarrow$(ii). The converse
inclusion follows from the implication
(iii)$\Rightarrow$(i) and Lemma~\ref{cuw}.
   \end{proof}
   \begin{proof}[Proof of Theorem~\ref{stlymbu}]
A careful inspection of the proof of the implication
(i)$\Rightarrow$(ii) of Theorem~\ref{liumyt} together
with the ``moreover'' part of Lemma~\ref{jesd} shows
that (i) implies (ii). The implication
(ii)$\Rightarrow$(iii) is trivial. To prove that (iii)
implies (i), suppose that $T$ is the strong limit of
$\{T_n\}_{n=1}^{\infty}$, where
$\{T_n\}_{n=1}^{\infty}$ is a sequence of Brownian
unitaries. Using the uniform boundedness principle and
\eqref{tlex}, we deduce that there exists $\sigma \in
[0,\infty)$ such that $\cov{T_n} \Le \sigma$ for all
integers $n\Ge 1$. Applying the implication
(iii)$\Rightarrow$(i) of Theorem~\ref{liumyt}, we
conclude that $T$ is a $2$-isometry.
   \end{proof}
Our next goal is to investigate the cases of strong,
weak and $^*$-strong topologies. Recall that if $\hh$
and $\kk$ are Hilbert spaces such that $\hh \subseteq
\kk$, then an operator $S\in \ogr{\hh}$ is called the
{\em compression} of an operator $R \in \ogr{\kk}$ to
$\hh$ if
   \begin{align} \label{nytinp}
S=P_{\hh}R|_{\hh},
   \end{align}
where $P_{\hh}\in \ogr{\kk}$ is the orthogonal
projection of $\kk$ onto $\hh$. We refer the reader to
\cite{Szym92}, where topological properties of the
compression map $R \mapsto P_{\hh}R|_{\hh}$ are
studied. We now state a version of Conway and Hadwin's
result with a correction on infinite-dimensionality
(see Remarks~\ref{ibiu} and \ref{tykire}).
   \begin{thm}[\mbox{\cite[Theorem]{CoHad83}}] \label{cohyd}
Suppose that $\hh$ is an infinite-dimensional
separable Hilbert space, and $\mathscr{E}$ is a class
of operators closed under approximate equivalence and
orthogonal sums. If $S\in \ogr{\hh}$, then{\em :}
   \begin{enumerate}
   \item[(i)] $S$ is the
strong limit of a sequence $\{S_n\}_{n=1}^{\infty}
\subseteq \ogr{\hh}$ of operators of class
$\mathscr{E}$ if and only if $S$ is the restriction of
an operator of class $\mathscr{E}$ to its invariant
subspace $\hh$,
   \item[(ii)] $S$ is the
weak limit of a sequence $\{S_n\}_{n=1}^{\infty}
\subseteq \ogr{\hh}$ of operators of class
$\mathscr{E}$ if and only if $S$ is the compression of
an operator of class $\mathscr{E}$ to $\hh$,
   \item[(iii)] $S$ is the $^*$-strong limit of a
sequence $\{S_n\}_{n=1}^{\infty} \subseteq \ogr{\hh}$
of operators of class $\mathscr{E}$ if and only if $S$
is the restriction of an operator of class
$\mathscr{E}$ to its reducing subspace $\hh$.
   \end{enumerate}
   \end{thm}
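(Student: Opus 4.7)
The plan is to handle the three equivalences (i), (ii), (iii) in parallel, using Lemma~\ref{jesd} as a template for the sufficient (``if'') directions and invoking a Voiculescu-type absorption argument for the necessary (``only if'') directions.

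For the three sufficiency directions, I would adapt the construction from Lemma~\ref{jesd}. First, by replacing $R$ with $R\oplus R\oplus\cdots$ and using closure of $\mathscr{E}$ under orthogonal sums, one can arrange that the ambient space $\kk$ is separable and infinite-dimensional, hence of the same orthogonal dimension as $\hh$. Enumerate a dense sequence in $\hh$, let $\tau_n$ be the linear span of the first $n$ vectors, and pick a unitary $U_n\in\ogr{\hh,\kk}$ that restricts to the identity on an appropriate finite-dimensional subspace: for (i) the subspace $\tau_n + R(\tau_n)$, which lies in $\hh$ because $\hh$ is invariant for $R$; for (iii) the subspace $\tau_n + R(\tau_n) + R^*(\tau_n)$, which lies in $\hh$ because $\hh$ reduces $R$; and for (ii) simply $\tau_n$. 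Setting $S_n = U_n^*RU_n\in\mathscr{E}$ (by closure under unitary equivalence, which is implied by closure under approximate equivalence), a direct computation gives strong convergence $S_n\to S$ in case (i) and $^*$-strong convergence in case (iii); in case (ii) weak convergence follows from $\is{RU_nf}{U_ng}\to\is{Rf}{g}=\is{P_{\hh}R|_{\hh}f}{g}=\is{Sf}{g}$ for $f,g\in\hh$, where $P_{\hh}$ denotes the orthogonal projection of $\kk$ onto $\hh$.

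For the three necessity directions, the starting point is the orthogonal sum $R_0=\bigoplus_{n=1}^{\infty}S_n$ on $\hh^{(\infty)}$, which is uniformly bounded (by the uniform boundedness principle applied to any of the three modes of convergence of $\{S_n\}$) and therefore lies in $\mathscr{E}$ by closure under orthogonal sums. The task is to realize $S$ as the restriction of some operator $R\in\mathscr{E}$ to an invariant subspace in (i), as the restriction to a reducing subspace in (iii), and as the compression to an arbitrary closed subspace in (ii). The construction proceeds by exhibiting an isometry $V\colon\hh\to\hh^{(\infty)}$ built from the convergent sequences $\{S_nh\}_n$ (and $\{S_n^*h\}_n$ for case (iii)) via an approximate-identity projection scheme on $\hh$, so that $V^*R_0V=S$, and then appealing to Voiculescu's non-commutative Weyl--von Neumann theorem to absorb the resulting dilation into an operator approximately equivalent to $R_0$ for which $V(\hh)$ becomes invariant in case (i), reducing in case (iii), or arbitrary in case (ii). Closure of $\mathscr{E}$ under approximate equivalence then yields the required operator in $\mathscr{E}$. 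The main obstacle is the application of Voiculescu's theorem, which is precisely why the infinite-dimensional separability of $\hh$ is essential and why the original statement requires the noted correction, since the absorption principle that makes this argument work fails in finite dimensions.
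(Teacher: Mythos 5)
You should first note how this statement functions in the paper: Theorem~\ref{cohyd} is not proved there at all, but quoted from \cite{CoHad83}; the only argument the authors supply is Remark~\ref{tykire}, which justifies the purely formal replacement of ``unitary equivalence/isometric embedding'' by a coordinate embedding, and Remark~\ref{ibiu}, which explains why infinite-dimensionality of $\hh$ must be added in (ii). So your proposal undertakes to reprove the cited theorem from scratch. Your three ``if'' directions, modelled on Lemma~\ref{jesd}, are essentially sound: unitaries $U_n\in\ogr{\hh,\kk}$ fixing $\tau_n+R(\tau_n)$ (resp.\ $\tau_n+R(\tau_n)+R^*(\tau_n)$, resp.\ $\tau_n$) pointwise do yield strong, $^*$-strong and weak convergence of $U_n^*RU_n$ to $S$ as you compute. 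But your opening reduction is backwards: replacing $R$ by $R\oplus R\oplus\cdots$ only enlarges the ambient space and cannot make a nonseparable $\kk$ separable; what the construction of the $U_n$ actually needs is $\dim\kk=\dim\hh=\aleph_0$, which in the Conway--Hadwin framework comes from working with classes of operators on separable spaces, not from an inflation trick.

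The genuine gap is in the ``only if'' directions, which are the substance of the theorem. You assert that an isometry $V\colon\hh\to\hh^{(\infty)}$ with $V^*R_0V=S$ exactly can be built ``via an approximate-identity projection scheme,'' but no such exact identity follows from strong, weak or $^*$-strong convergence of $S_n$ to $S$: the natural coordinate isometries only give $V_n^*R_0V_n=S_n$, and converting this approximate information into an exact restriction to an invariant subspace (case (i)), an exact compression (case (ii)), or an exact reducing-subspace restriction (case (iii)) of some operator approximately equivalent to $R_0$ is precisely what Conway and Hadwin prove; it cannot be taken for granted. The appeal to Voiculescu's noncommutative Weyl--von Neumann theorem is likewise unsupported: that theorem absorbs representations of the $C^*$-algebra generated by $R_0$ which annihilate its intersection with the compacts, and you exhibit no such representation (a compression $V^*R_0(\cdot)V$ is not multiplicative, hence is not one), nor any argument why, after the alleged absorption, $V(\hh)$ would become invariant in case (i) or reducing in case (iii). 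As written, the hard half of each equivalence is a plan rather than a proof; since the paper itself simply cites \cite{CoHad83} and only corrects the statement, the honest options are to do the same or to reproduce the actual Conway--Hadwin argument in full detail.
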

   \begin{rem}  \label{tykire}
The above version of Conway-Hadwin theorem differs
from the original in that the term ``unitary
equivalence'' does not appear in it. Formally
speaking, we have replaced ``isometric embedding'' by
``coordinate embedding''. To explain this, we argue as
follows. Suppose that $S\in \ogr{\hh}$ is unitarily
equivalent to $\tilde{S} \in \ogr{\tilde{\hh}}$, i.e.,
$\tilde{S}=U^*SU$ for some unitary operator $U\in
\ogr{\tilde \hh,\hh}$, and $\tilde{S} =
P_{\tilde{\hh}} \tilde{R} |_{\tilde{\hh}}$, where
$\tilde{R}\in \ogr{\tilde{\kk}}$ (cf.\
\eqref{nytinp}). Then taking a Hilbert space
$\EuScript{M}$ with $\dim \EuScript{M}= \dim
\tilde{\hh}^{\perp}$ and a unitary $W\in
\ogr{\tilde{\hh}^{\perp}, \EuScript{M}}$, we can
define the ``coordinate embedding'' $V\colon \hh \ni h
\mapsto (h,0)\in \hat{\kk}$ of $\hh$ into
$\hat{\kk}:=\hh\oplus \EuScript{M}$. Set $\hat{S} :=
\hat{V}S \hat{V}^* \in \ogr{\hat{\hh}}$ with
$\hat{V}\colon \hh \to \hat{\hh}:=\ob{V}$ given by
$\hat{V}h=Vh$ for $h\in \hh$, and $\hat{R}:=\hat{U}
\tilde{R} \hat{U}^* \in \ogr{\hat{\kk}}$ with $\hat{U}
\in \ogr{\tilde{\kk}, \hat{\kk}}$ given by $\hat{U} :=
U \oplus W$. Using the identity $P_{\tilde{\hh}} =
\hat{U}^* P_{\hat{\hh}} \hat{U}$, we conclude that
$\hat{S}= P_{\hat{\hh}} \hat{R} |_{\hat{\hh}}$, and
$\tilde{\hh}$ is invariant (resp., reducing) for
$\tilde{R}$ if and only if $\hat{\hh}$ is invariant
(resp., reducing) for $\hat{R}$. Clearly, $\tilde{R}$
is of class $\mathscr{E}$ if and only if $\hat{R}$ is
of class $\mathscr{E}$ (provided $\mathscr{E}$ is as
in Theorem~\ref{cohyd}). \hfill $\diamondsuit$
   \end{rem}
   \begin{rem} \label{ibiu}
Note that if $\hh$ is finite-dimensional, then
assertion (ii) of Theorem~\ref{cohyd} is not true in
general. For example, it is not true for the classes
$\mathscr{U}$ and $\mathscr{N}$ of unitary and normal
operators. Indeed, if it were true for
$\mathscr{E}=\mathscr{U}$, then since $\mathscr{U}$ is
closed in operator norm topology, the compression
$P_{\hh}U|_{\hh}$ of any unitary operator $U\in
\ogr{\kk}$ to any finite-dimensional subspace $\hh$ of
$\kk$ would be unitary, so any unitary operator would
be reduced by any finite-dimensional subspace, which
is not the case if $\dim \hh \Ge 2$. In turn, if it
were true for $\mathscr{E}=\mathscr{N}$, then since
$\mathscr{N}$ is closed in operator norm topology, the
compression of any normal operator $N\in\ogr{\kk}$ to
any finite-dimensional subspace $\hh$ of $\kk$ would
be normal. However, this is not the case. Indeed, the
operator $B=[\begin{smallmatrix} 0& 0 \\ 1 & 0
\end{smallmatrix}]$ is a non-normal compression of the unitary
operator $A=\Big[\begin{smallmatrix} 0& 0 & 1 \\
1 & 0 & 0 \\ 0 & 1 & 0 \end{smallmatrix}\Big]$ to
$\cbb^2 \oplus \{0\}$. This example also shows that
assertion (ii) of Theorem~\ref{cohyd} is not true for
$\mathcal{E}=\mathcal{U}$ if $2 \Le \dim \hh <
\aleph_0$. We refer the reader to \cite{Hol13}, where
the question of when compressions of normal matrices
are normal is investigated.

It is worth pointing out that by the Halmos's dilation
theorem (see \cite[Problem~222 and Corollary]{Hal82}),
the set of compressions of unitary (resp., normal)
operators to a fixed Hilbert space $\hh$ is equal to
the set of contractions (resp., operators) on $\hh$.
On the other hand, if $\dim \hh \Ge \aleph_0$, then
the weak closure of the set $\mathbb{U}_{\hh}$ (resp.,
$\mathbb{N}_{\hh}$) of unitary (resp., normal)
operators on $\hh$ is equal to the set of contractions
(resp., operators) on $\hh$ (see
\cite[Problem~224]{Hal82}). As a consequence, if $\dim
\hh \Ge \aleph_0$, then the weak closure of
$\mathbb{U}_{\hh}$ (resp., $\mathbb{N}_{\hh}$) is
equal to the set of compressions of unitary (resp.,
normal) operators to $\hh$. However, if $2 \Le \dim
\hh < \aleph_0$, this equality is false.
   \hfill $\diamondsuit$
   \end{rem}
   \begin{proof}[Proof of Theorem~\ref{cymprs}]
(i) By Proposition~\ref{apyyqe}, the class
$\mathscr{E}_{\sigma}$ of all Brownian unitaries of
covariance $\sigma$ satisfies the assumptions of
Theorem~\ref{cohyd}. Hence, by the statement (i) of
this theorem, the operator $T\in \ogr{\hh}$ is the
strong limit of a sequence $\{T_n\}_{n=1}^{\infty}
\subseteq \ogr{\hh}$ of Brownian unitaries of
covariance $\sigma$ if and only if $T$ is the
restriction of a Brownian unitary of covariance
$\sigma$ to its invariant subspace $\hh$. In turn, by
Lemma~\ref{czw} and Theorem~\ref{43}, the latter holds
if and only if $T$ is a $2$-isometry with $\cov{T} \Le
\sigma$.

(ii) This is a direct consequence of the statement
(ii) of Theorem~\ref{cohyd} applied to
$\mathscr{E}=\mathscr{E}_{\sigma}$.

(iii) Applying the statement (iii) of
Theorem~\ref{cohyd} to the class
$\mathscr{E}=\mathscr{E}_{\sigma}$, we see that $T$ is
the $^*$-strong limit of a sequence
$\{T_n\}_{n=1}^{\infty} \subseteq \ogr{\hh}$ of
Brownian unitaries of covariance $\sigma$ if and only
if $T$ is the restriction of a Brownian unitary of
covariance $\sigma$ to its reducing subspace $\hh$. By
Corollary~\ref{resyte}, the latter holds if and only
if $T$ itself is a Brownian unitary of covariance $0$
or $\sigma$. This completes the proof.
   \end{proof}
The following two facts are direct consequences of
Theorem~\ref{cymprs}.
   \begin{cor}[cf.\ Remark~\ref{ibiu}]
If $\hh$ is an infinite-dimen\-sion\-al separable
Hilbert space and $T\in \ogr{\hh}$, then $T$ is the
weak limit of a sequence $\{T_n\}_{n=1}^{\infty}
\subseteq \ogr{\hh}$ of unitary operators if and only
if $\|T\| \Le 1$.
   \end{cor}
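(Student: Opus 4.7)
The plan is to derive the corollary directly from statement (ii) of Theorem~\ref{cymprs} applied with $\sigma=0$. Recall from the introduction that, by definition, a Brownian unitary of covariance $0$ is simply a unitary operator. Therefore, specializing Theorem~\ref{cymprs}(ii) to this case tells us that $T\in\ogr{\hh}$ is the weak limit of a sequence of unitaries if and only if $T$ is the compression of some unitary operator to $\hh$. The corollary will then follow once we identify the set of compressions of unitary operators with the closed unit ball of $\ogr{\hh}$.

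For the identification, both inclusions are classical. The forward inclusion is immediate: if $T=P_{\hh}U|_{\hh}$ for some unitary $U\in\ogr{\kk}$ with $\hh\subseteq\kk$, then for every $h\in\hh$ we have $\|Th\|\Le\|P_{\hh}Uh\|\Le\|Uh\|=\|h\|$, hence $\|T\|\Le 1$. The reverse inclusion is exactly Halmos's dilation theorem, already invoked in Remark~\ref{ibiu} via \cite[Problem~222]{Hal82}: any contraction $T\in\ogr{\hh}$ dilates to a unitary operator on (for instance) $\hh\oplus\hh$ through the standard $2\times 2$ block construction involving the defect operators $(I-T^*T)^{1/2}$ and $(I-TT^*)^{1/2}$, so $T$ is the compression of a unitary.

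Combining the two steps yields the stated equivalence. I do not expect any genuine obstacle, as the separability and infinite-dimensionality hypotheses on $\hh$ are exactly those required by Theorem~\ref{cymprs}, and the dilation step is classical; the only thing to verify carefully is the convention that ``Brownian unitary of covariance $0$'' really does mean ``unitary'' (as stipulated in the paragraph immediately following Theorem~\ref{chyrby}'s preceding definition), so that the instantiation $\sigma=0$ is legitimate in Theorem~\ref{cymprs}(ii).
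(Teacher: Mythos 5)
Your proposal is correct and follows essentially the same route as the paper: the corollary is stated there as a direct consequence of Theorem~\ref{cymprs}(ii) with $\sigma=0$ (Brownian unitaries of covariance $0$ being unitaries), combined with the identification, via Halmos's dilation theorem as recalled in Remark~\ref{ibiu}, of the compressions of unitary operators to $\hh$ with the contractions on $\hh$. Your explicit verification of both inclusions in that identification is a fine (if routine) addition.
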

   \begin{cor}
For $\sigma\in [0,\infty)$, any unitary operator on an
infinite-dimen\-sion\-al separable Hilbert space $\hh$
is a $^*$-strong limit of a sequence of Brownian
unitaries of covariance $\sigma$.
   \end{cor}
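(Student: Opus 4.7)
The plan is to derive the corollary as an immediate specialization of Theorem~\ref{cymprs}(iii). Fix $\sigma\in[0,\infty)$ and a unitary operator $T\in\ogr{\hh}$ on an infinite-dimensional separable Hilbert space $\hh$. I would observe that the right-hand side of the equivalence in Theorem~\ref{cymprs}(iii) — ``$T$ is either a unitary operator or a Brownian unitary of covariance $\sigma$'' — is satisfied by the first alternative. The equivalence then automatically yields a sequence $\{T_n\}_{n=1}^{\infty}\subseteq\ogr{\hh}$ of Brownian unitaries of covariance $\sigma$ converging $^*$-strongly to $T$, which is exactly the claim.

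If one prefers to unpack this invocation into its constituents rather than quote Theorem~\ref{cymprs}(iii) wholesale, the chain is as follows. For $\sigma=0$ the assertion is trivial, since Brownian unitaries of covariance $0$ are precisely unitary operators, so one may take $T_n=T$. For $\sigma>0$, choose any Brownian unitary $B$ of covariance $\sigma$ on an infinite-dimensional separable Hilbert space $\kk$; the defining matrix form together with the dimension constraint $1\Le\dim\hh_2\Le\dim\hh_1$ permits this with $\dim\hh_1=\dim\hh_2=\aleph_0$. Set $R:=T\oplus B\in\ogr{\hh\oplus\kk}$. By Corollary~\ref{wigrn}, $R$ is a Brownian unitary, and by Lemma~\ref{cysoof}, $\cov{R}=\sigma$. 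The subspace $\hh\oplus\{0\}$ is reducing for $R$ with $R$ restricted to it unitarily equivalent to $T$. Since the class $\mathscr{E}_\sigma$ of Brownian unitaries of covariance $\sigma$ is closed under approximate equivalence and orthogonal sums by Proposition~\ref{apyyqe}, Theorem~\ref{cohyd}(iii) applied on the separable infinite-dimensional space $\hh$ furnishes the required $^*$-strongly convergent sequence in $\ogr{\hh}$.

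There is essentially no main obstacle to surmount: all the substantive machinery — the algebraic characterization via Theorem~\ref{chyrby}, the closure properties of $\mathscr{E}_\sigma$ in Proposition~\ref{apyyqe}, the Conway--Hadwin theorem in the form of Theorem~\ref{cohyd}, and the reducing-subspace description of Corollary~\ref{resyte} — has been established upstream. The only minor point worth noting is the existence of at least one Brownian unitary of covariance $\sigma$ on a separable infinite-dimensional Hilbert space, which is trivial from the definition.
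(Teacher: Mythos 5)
Your proposal is correct and follows the paper's own route: the paper states this corollary as a direct consequence of Theorem~\ref{cymprs}(iii), which is exactly your first paragraph, and your optional unpacking merely reproduces the ingredients already used in the proofs of Theorem~\ref{cymprs}(iii) and Corollary~\ref{resyte} (the $T\oplus B$ construction with Proposition~\ref{apyyqe} and Theorem~\ref{cohyd}(iii)). No gaps; the only cosmetic point is that Corollary~\ref{wigrn} lists the positive-covariance summand first, so strictly you apply it to $B\oplus T$ (or cite Proposition~\ref{orhsy} directly), which is immaterial.
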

\section{\label{Sec.5}Examples}
In this section we provide some explicit examples of
unitary operators that are $^*$-strong limits of
sequences of Brownian unitaries of constant positive
covariance. In particular, this shows that the
covariance function $T \mapsto \cov{T}$ is not
$^*$-strongly continuous.
   \begin{exa} \label{prz1}
Let $\hh$ be an infinite-dimensional separable Hilbert
space and let $\{e_n\}_{n=1}^{\infty}$ be an
orthonormal basis of $\hh$. Fix a real number $\sigma
> 0$ and two sequences $\{\lambda_j\}_{j=1}^{\infty}$ and
$\{z_j\}_{j=1}^{\infty}$ of complex numbers of modulus
$1$. Set $\hh_{2,n}=\langle e_{n+1}\rangle$ and
$\hh_{1,n}=\hh_{2,n}^{\perp}$, where $\langle f
\rangle=\cbb \cdot f$ for $f\in \hh$. Clearly
$\hh=\hh_{1,n} \oplus \hh_{2,n}$ and
$\hh_{1,n}=\bigoplus_{j\neq n+1} \langle e_j \rangle$.
Define the unitary operator $W_n\in
\ogr{\bigoplus_{j=1}^n \langle e_j \rangle}$ by
   \begin{align*}
W_n\Big(\sum_{j=1}^n \xi_j e_j\Big) = \sum_{j=1}^n
\lambda_j \xi_j e_j, \quad (\xi_1, \ldots, \xi_n)\in
\cbb^n,
   \end{align*}
and the unilateral shift $\widetilde{V}_n\in
\ogr{\bigoplus_{j=n+2}^{\infty} \langle e_j \rangle}$
of multiplicity $1$ by
   \begin{align*}
\widetilde{V}_n\Big(\sum_{j=n+2}^{\infty} \xi_j
e_j\Big) = \sum_{j=n+2}^{\infty} \xi_j e_{j+1}, \quad
\{\xi_{j+n+1}\}_{j=1}^{\infty} \in \ell^2.
   \end{align*}
Set $V_n= W_n \oplus \widetilde{V}_n \in
\ogr{\hh_{1,n}}$. Define the operators $E_n\in
\ogr{\hh_{2,n},\hh_{1,n}}$ and $U_n\in
\ogr{\hh_{2,n}}$ by $E_n(\xi e_{n+1})=\xi e_{n+2}$ and
$U_n(\xi e_{n+1}) = z_n \xi e_{n+1}$ for $\xi\in
\cbb$. Clearly, $V_n$ and $E_n$ are isometries such
that $\hh_{1,n}=\ob{V_n}\oplus \ob{E_n}$. Define the
Brownian unitary $T_n\in \ogr{\hh}$ by $T_n=\big[\begin{smallmatrix} V_n & \sigma E_n \\
0 & U_n \end{smallmatrix}\big]$ relative to
$\hh=\hh_{1,n} \oplus \hh_{2,n}$. The covariance of
each $T_n$ is equal to $\sigma$. Finally, let $T\in
\ogr{\hh}$ be the unitary operator defined by
   \begin{align*}
Th=\sum_{j=1}^{\infty} \lambda_j \is{h}{e_j} e_j,
\quad h \in \hh.
   \end{align*}

We claim that the sequence $\{T_n\}_{n=1}^{\infty}$
converges $^*$-strongly to $T$. First, we show that
$\{T_n\}_{n=1}^{\infty}$ converges strongly to $T$.
For, take a vector $h \in \hh$ and decompose it as
follows
   \begin{align*}
h = h_n \oplus g_{n+1} \oplus \is{h}{e_{n+1}} e_{n+1},
\quad n\Ge 1,
   \end{align*}
where $h_n:=\sum_{j=1}^n \is{h}{e_j} e_j$ and
$g_n=h-h_n$. Clearly, $h_n \oplus g_{n+1} \in
\hh_{1,n}$ and $\is{h}{e_{n+1}} e_{n+1}\in \hh_{2,n}$.
Note that $T=W_n \oplus \widetilde{W}_n$, where
$\widetilde{W}_n\in \ogr{\bigoplus_{j=n+1}^{\infty}
\langle e_j \rangle}$ is the unitary operator given by
   \begin{align*}
\widetilde{W}_n \bigg(\sum_{j=n+1}^{\infty} \xi_j
e_j\bigg) = \sum_{j=n+1}^{\infty} \lambda_j \xi_j e_j,
\quad \{\xi_{j+n}\}_{j=1}^{\infty}\in \ell^2.
   \end{align*}
Since $Th=W_n h_n \oplus \widetilde{W}_n g_n$, we get
   \begin{align*}
\|T_n h - Th\| & = \big\|\widetilde{V}_n g_{n+1}
\oplus \sigma \is{h}{e_{n+1}} e_{n+2} \oplus z_n
\is{h}{e_{n+1}} e_{n+1} - \widetilde{W}_n g_n\big\|
   \\
& \Le \|g_{n+1}\| + (\sigma +1) |\is{h}{e_{n+1}}| +
\|g_n\|, \quad n \Ge 1.
   \end{align*}
However, $\lim_{n\to\infty} h_n=h$ and
$\lim_{n\to\infty} \is{h}{e_n}=0$, so $Th=\lim_{n\to
\infty} T_n h$.

It remains to prove that $\{T_n^*\}_{n=1}^{\infty}$
converges strongly to $T^*$. First, observe that
$T_n^*= \big[\begin{smallmatrix} V_n^* & 0 \\
\sigma E_n^* & U_n^* \end{smallmatrix}\big]$ and
$E_n^* (h_n \oplus g_{n+1}) = \is{h}{e_{n+2}}e_{n+1}$.
Hence, we have
   \begin{align*}
\|T_n^* h - T^*h\| & = \big \|\widetilde{V}_n^*
g_{n+1} \oplus \sigma \is{h}{e_{n+2}}e_{n+1} + \bar
z_n \is{h}{e_{n+1}}e_{n+1} - \widetilde{W}_n^* g_n
\big\|
   \\
&\Le \|g_{n+1}\| + \sigma |\is{h}{e_{n+2}}| +
|\is{h}{e_{n+1}}| + \|g_n\|, \quad n \Ge 1,
   \end{align*}
which implies that $T^*h = \lim_{n\to \infty} T_n^*
h$.

Summarizing, we have proved that the sequence
$\{T_n\}_{n=1}^{\infty}$ of Brownian unitaries of
covariance $\sigma$ converges $^*$-strongly to the
unitary operator $T$.
   \hfill $\diamondsuit$
   \end{exa}
The Brownian unitaries $T_n$ appearing in
Example~\ref{prz1} resemble Brownian shifts (cf.\
\cite[Proposition~5.2]{Ag-St95-6}). In particular,
$\dim{\hh_{2,n}}=1$ for all $n$. However, if
$\mathfrak{n}$ is any cardinal number, then taking the
orthogonal sum of such operators with $\mathfrak{n}$
summands and using Proposition~\ref{orhsy}, we can
obtain a sequence of Brownian unitaries of constant
positive covariance, which $^*$-strongly converges to
a unitary operator and has the property that
$\dim{\hh_{2,n}}=\mathfrak{n}$ for all $n$. The next
example is not an orthogonal sum as above, and
$\dim{\hh_{2,n}} = \aleph_0$ for all $n$.
   \begin{exa} \label{przew2}
In this example, given an isometry $V$ on a Hilbert
space $\kk$, we write $\widehat{V}$ for the operator
$V$ regarded as an operator from $\kk$ onto $\ob{V}$.
Clearly, $\widehat{V}$ is a unitary isomorphism. Let
$\sigma$ be a fixed positive real number and
$T_0=\big[\begin{smallmatrix} V & \sigma  E \\
0 & U\end{smallmatrix}\big]$ be a Brownian unitary of
covariance $\sigma$ relative to an orthogonal
decomposition $\hh=\hh_1\oplus \hh_2$ with $\dim
\hh_1=\dim \hh_2=\aleph_0$. Assume that
$\{\mcal_n\}_{n=1}^{\infty}$ is a sequence of closed
subspaces of $\hh_2$ reducing $U$ and such that
$\hh_2= \bigoplus_{n=1}^{\infty} \mcal_n$ and $\dim
\mcal_n = \aleph_0$ for all integers $n\Ge 1$. Since
$\dim \mcal_j = \dim E(\mcal_j)\oplus \mcal_j$, there
exists a unitary isomorphism $W_j$ from $\mcal_j$ onto
$E(\mcal_j)\oplus \mcal_j$. Then $W^{(n)}:=W_1 \oplus
\ldots \oplus W_n$ is a unitary isomorphism from
$\bigoplus_{j=1}^{n} \mcal_j$ onto
$\bigoplus_{j=1}^{n} E(\mcal_j) \oplus
\bigoplus_{j=1}^{n} \mcal_j$ (use the fact that $E$ is
isometric). For $n\Ge 1$, we define $\hh_{1,n}=\hh_1
\oplus \bigoplus_{j=1}^{n} \mcal_j $ and
$\hh_{2,n}=\bigoplus_{j=n+1}^{\infty} \mcal_j$.
Clearly, $\hh=\hh_{1,n} \oplus \hh_{2,n}$ and
$\dim{\hh_{1,n}} = \dim{\hh_{2,n}} = \aleph_0$. Using
the fact that $\ob{V}\perp \ob{E}$, we can define the
isometry $V_n\in \ogr{\hh_{1,n}}$ as follows
   \begin{align*}
V_n (h_1 \oplus \hat h_n) = Vh_1 \oplus W^{(n)} \hat
h_n, \quad h_1\in \hh_1, \, \hat h_n \in
\bigoplus_{j=1}^{n} \mcal_j.
   \end{align*}
Set $E_n=E|_{\hh_{2,n}}$ and $U_n=U|_{\hh_{2,n}}$.
Then $E_n$ is an isometry from $\hh_{2,n}$ into
$\hh_{1,n}$ and $U_n$ is a unitary operator on
$\hh_{2,n}$. Since
   \begin{align} \label{werzu}
\ob{V_n}=\ob{V} \oplus \bigoplus_{j=1}^{n} E(\mcal_j)
\oplus \bigoplus_{j=1}^{n} \mcal_j, \quad n\Ge 1,
   \end{align}
and $\ob{E_n}= \bigoplus_{j=n+1}^{\infty} E(\mcal_j)$,
we infer from $\hh_1=\ob{V}\oplus \ob{E}$ that
   \begin{align*}
\ob{V_n}\oplus \ob{E_n} = \ob{V} \oplus \ob{E} \oplus
\bigoplus_{j=1}^{n} \mcal_j = \hh_{1,n}, \quad n\Ge 1.
   \end{align*}
This means that
the operator $T_n\in \ogr{\hh}$ given by $T_n=\big[\begin{smallmatrix} V_n & \sigma E_n \\
0 & U_n \end{smallmatrix}\big]$ relative to
$\hh=\hh_{1,n} \oplus \hh_{2,n}$ is a Brownian unitary
of covariance $\sigma$. Since $\hh=\ob{V} \oplus
\ob{\bigoplus_{j=1}^{\infty} W_j}$, we can define the
unitary operator $T\in \ogr{\hh}$ by
$T=\widehat{V}\oplus \bigoplus_{j=1}^{\infty} W_j$.

First, we show that $\{T_n\}_{n=1}^{\infty}$ converges
strongly to $T$. Fix a vector $h\in \hh$ and decompose
it as $h=h_{1,n} \oplus h_{2,n}$, where $h_{1,n} \in
\hh_{1,n}$ and $h_{2,n}\in \hh_{2,n}$. Noting that
$T=\widehat{V_n} \oplus \bigoplus_{j=n+1}^{\infty}
W_j$, we have
   \allowdisplaybreaks
   \begin{align*}
\|T_n h - Th\| & = \Big\|V_nh_{1,n} \oplus \sigma E_n
h_{2,n} \oplus U_n h_{2,n} - \Big[V_n h_{1,n} \oplus
\Big(\bigoplus_{j=n+1}^{\infty} W_j\Big)
h_{2,n}\Big]\Big\|
   \\
& = \Big\|\sigma E_n h_{2,n} + \Big(U_n -
\bigoplus_{j=n+1}^{\infty} W_j\Big)h_{2,n} \Big\| \Le
(\sigma + 2) \|h_{2,n}\|, \quad n\Ge 1.
   \end{align*}
Since $h_{2,n}$ converges to $0$ as $n\to \infty$,
this implies that $Th=\lim_{n\to \infty} T_n h$.

Next, we prove that $\{T_n^*\}_{n=1}^{\infty}$
converges strongly to $T^*$. Decompose a vector $h\in
\hh$ as $h=h_1 \oplus g_{n} \oplus h_{2,n}$, where
$h_1\in \hh_1$, $g_n \in \bigoplus_{j=1}^{n} \mcal_j$
and $h_{2,n}\in \hh_{2,n}$. Then $h_{1,n}:=h_1 \oplus
g_n\in \hh_{1,n}$. Denote by $Q_n$ (resp., $P_n$) the
orthogonal projection of $\hh$ (resp., $\hh_{1,n}$)
onto $\ob{V_n}$ and set $Q_n^{\bot}:=I-Q_n$. Since
$V_n^*= \widehat{V_n}^*P_n$ and $Q_n h = Q_n
h_{1,n}=P_nh_{1,n}$, we have
   \begin{align} \label{vnwj}
V_n^* h_{1,n} = \widehat{V_n}^* P_n h_{1,n} =
\widehat{V_n}^* Q_n h, \quad n \Ge 1.
   \end{align}
Noting that $T_n^*= \big[\begin{smallmatrix} V_n^* & 0 \\
\sigma E_n^* & U_n^*
\end{smallmatrix}\big]$ and  $T^*=\widehat{V_n}^* \oplus
\bigoplus_{j=n+1}^{\infty} W_j^*$, we get
   \allowdisplaybreaks
   \begin{align} \notag
\|T_n^* h - T^* h\| & = \Big\|V_n^* h_{1,n} \oplus
(\sigma E_n^* h_{1,n} + U_n^* h_{2,n})
   \\ \notag
&\hspace{10ex}- \Big[\widehat{V_n}^* Q_n h \oplus
\Big(\bigoplus_{j=n+1}^{\infty} W_j^*\Big)Q_n^{\perp}h
\Big]\Big\|
   \\  \notag
& \hspace{-1ex} \overset{\eqref{vnwj}}= \Big\|\sigma
E_n^* h_{1,n} + U_n^* h_{2,n} -
\Big(\bigoplus_{j=n+1}^{\infty} W_j^*\Big)Q_n^{\perp}h
\Big\|
   \\ \label{vaosr}
& \Le \sigma \|E_n^* h_{1,n}\| + \|h_{2,n}\| +
\|Q_n^{\perp}h\|, \quad n \Ge 1.
   \end{align}
Since $E_n$ is an isometry, $E_nE_n^*$ is the
orthogonal projection of $\hh_{1,n}$ onto $\ob{E_n}$.
Observing that $h_{1,n} - h_1 = g_n \perp \ob{E_n}$,
we see that
   \begin{align*}
E_nE_n^*h_{1,n}=E_nE_n^*h_1= R_n h_1, \quad n\Ge 1,
   \end{align*}
where $R_n$ is the orthogonal projection of $\hh_1$
onto $\ob{E_n}$. Recalling that $\ob{E_n}=
\bigoplus_{j=n+1}^{\infty} E(\mcal_j)$, we deduce that
$\{R_n\}_{n=1}^{\infty}$ converges strongly to $0$. As
a consequence, $\lim_{n\to \infty} E_nE_n^*h_{1,n}=0$,
which implies that $\lim_{n\to \infty}
E_n^*h_{1,n}=0$. Since $\hh_1=\ob{V}\oplus \ob{E}$ and
$\ob{Q_n}= \ob{V_n}$, we infer from \eqref{werzu} that
$\{Q_n\}_{n=1}^{\infty}$ converges strongly to $I$.
Hence, $\lim_{n\to \infty} Q_n^{\perp}h=0$. Knowing
that $h_{2,n}$ converges to $0$ as $n\to \infty$ and
using \eqref{vaosr}, we conclude that
$\{T_n^*\}_{n=1}^{\infty}$ converges strongly to
$T^*$.

Thus, we have proved that the sequence
$\{T_n\}_{n=1}^{\infty}$ of Brownian unitaries of
covariance $\sigma$ converges $^*$-strongly to the
unitary operator $T$.
   \hfill $\diamondsuit$
   \end{exa}


   \end{document}